\numberwithin{equation}{section}
\newcommand{\bigR}{{\mathbb R}}
\newtheorem{theorem}{Theorem}[section]  
\newtheorem{lemma}[theorem]{Lemma}  
\newtheorem{proposition}[theorem]{Proposition}  
\newtheorem{corollary}[theorem]{Corollary}  
\newtheorem{remark}[theorem]{Remark}   
\newtheorem{definition}[theorem]{Definition}
\begin{document}  
  
\title{Singularities of axially symmetric volume preserving mean curvature flow}  
%    Information for first author  
\author{Maria Athanassenas}  
%    Address of record for the research reported here  
\address{Maria Athanassenas, Defence Science and Technology Group, Eveleigh, NSW 2015 \&   School of Mathematical Sciences,  
Monash University,  
Vic 3800  
Australia}  
\email{maria.athanassenas@dst.defence.gov.au} 
  
%    Information for second author  
\author{Sevvandi Kandanaarachchi}  
%    Address of record for the research reported here  
\address{Sevvandi Kandanaarachchi, Department of Econometrics \& Business Statistics,  Level 8, 20 Chancellors Walk, Clayton Campus,  
Monash University,  
Vic 3800  
Australia}  
\email{sevvandi.kandanaarachchi@monash.edu} 

 \onehalfspacing  
  
\subjclass[2010]{53C44, 35K93} 

\begin{abstract}
We investigate the formation of singularities for surfaces evolving by volume preserving mean curvature flow. For axially symmetric flows - surfaces of revolution - in $\mathbb{R}^{3}$ with Neumann boundary conditions, we prove that the first developing singularity is of Type I. The result is obtained without any additional curvature assumptions being imposed, while axial symmetry and boundary conditions are justifiable given the volume constraint. Additional results and ingredients towards the main proof include a non-cylindrical parabolic maximum principle, and a series of estimates on geometric quantities involving gradient, curvature terms and derivatives thereof. These hold in arbitrary dimensions.   
\end{abstract} 

\maketitle

%----------------- for the author's initials to come up in citations ------------------------
%\bstctlcite{IEEEexample:BSTcontrol}
%---------------------------------------------------------------------------------------------
\section{Introduction}
\noindent
A hypersurface evolves by mean curvature flow if at each point it moves in the direction of its unit normal with speed given by its mean curvature. Assume $M^n$ to be a $n$-dimensional manifold and consider a one-parameter family of smooth immersions $\mathbf{x}_t: M^n \rightarrow \mathbb{R}^{n+1}$. The hypersurfaces $M_t =\mathbf{x}_t \left( M^n \right)$ evolving by mean curvature flow is equivalent to $\mathbf{x}_t = \mathbf{x}(\cdot, t)$ satisfying
\begin{equation}\label{eq:int_2}
\frac{d}{dt} \mathbf{x}(l, t)=-H(l,t)\nu(l,t), \hspace{5 mm} l\in M^n, t>0 \, .
\end{equation}

\noindent
By $\nu(l,t)$ we denote a smooth choice of unit normal of $M_t$ at $\mathbf{x}(l,t)$ (outer normal in case of compact surfaces without boundary), and by $H(l,t)$ the mean curvature with respect to this normal. Surface area is known to decrease under (\ref{eq:int_2}) and, provided the flow converges, the limit is a minimal surface.\\

\noindent
Here we are interested in the evolution of compact surfaces $M_t$ assumed to enclose a prescribed volume $V$. The evolution equation changes by introducing a forcing term as follows: 
\begin{equation}\label{eq:int_1}
\frac{d}{dt} \mathbf{x}(l,t) = -\left( H(l,t) -h(t)\right)\nu(l,t), 
\hspace{5 mm} l\in M^n, t>0,
\end{equation}
\noindent
where $h(t)$ is the average of the mean curvature,
\[h(t) =\frac{\int_{M_t} H dg_t}{\int_{M_t} dg_t}, \]
and $g_t$ denotes the metric on $M_t$. This flow is known to decrease the surface area while the enclosed volume remains constant. A limit surface in this case would have constant mean curvature and be a solution of the isoperimetric problem.\\

\noindent
In this paper, we are interested in the formation of singularities for surfaces evolving by (\ref{eq:int_1}). \\
%-------------------
%BEGIN COMMENT

\begin{comment}

In particular, we are interested in the formation of singularities for surfaces evolving by (\ref{eq:int_1}). We assume the initial compact $n$-dimensional hypersurface $M_0$  to be smoothly embedded in the domain $D = \left\{ \mathbf{x} \in \mathbb{R}^{n+1}, a \leq x_1 \leq b  \right\}$, $a,b >0 $, with boundary $\emptyset \neq \partial M_0 \subset \partial D$. \\  

\end{comment}

%END COMMENT
%----------------------

Extensive research has been undertaken in mean curvature flow, including on long-term geometric behaviour of solutions and the formation of singularities. The selection of references here is mainly guided by the techniques they introduce that are of relevance to our paper. Huisken \cite{GH1} proves  that uniformly convex, compact  surfaces become asymptotically spherical under mean curvature flow. Grayson \cite{GR1} proves that smooth embedded curves in the plane shrink to a point when evolving by curvature flow, becoming spherical in the limit. Ecker and Huisken \cite{EH89} prove that entire graphs of linear growth over $\mathbb{R}^n$ ``flatten out" with time when evolving by mean curvature.  Formation of singularities for (\ref{eq:int_2}) in the non-convex case is considered by Huisken \cite{GH2}, Grayson \cite{GRMA1}, Dziuk and Kawohl \cite{DGKB1},  Altschuler, Angenent and Giga \cite{AAG}, Huisken and Sinestrari  (\cite{HS1},\cite{HS3}).\\

%In \cite{AAG} Altschuler, Angenent and Giga study mean curvature flow of surfaces which are axially symmetric and prove that immediately after a singularity the surface becomes smooth and thus continue to study the flow after singularities. In \cite{HS3} Huisken and Sinestrari avoid the formation of the singularity by performing surgery to the surface before the singularity develops. In this procedure they remove the part of the surface which becomes singular, patch up the remaining surface by a smooth cap, and afterwards let the surface evolve once again. \\

The challenge in the volume preserving mean curvature is the global aspect introduced to equation (\ref{eq:int_1}) by $h$, rendering the use of standard local techniques either impossible or very complicated. In the case of a compact, uniformly convex initial hypersurface $M_0$ without boundary, Huisken \cite{GH3} proves long-time existence for (\ref{eq:int_1}) and convergence to a sphere.  The first author \cite{MA1} proves that an axially symmetric \footnote{By 'axially symmetric' we mean a surface of revolution generated through rotation of a curve. This is for consistency with our previous publication, while \cite{MA1} uses the expression 'rotationally symmetric'.} hypersurface in $D$, which encloses a sufficiently large volume and has Neumann boundary data, converges to a cylinder. While convexity is crucial for  \cite{GH3}, the axial symmetry assumption allows for geometric arguments to be used in \cite{MA1} to overcome difficulties related to the global aspect of $h$.  In \cite{MA2} she proves that thin necks of axially symmetric volume preserving mean curvature flow pinch-off in finite time, the singular set is discrete and finite along the axis of rotation, and that Type I singularities are self-similar and asymptotically cylindrical.  Escher and Simonett \cite{ES98} prove that if the hypersurface is a graph over a sphere with bounds on its height function, then it converges to the sphere under (\ref{eq:int_1}). Hartley combines geometric diffusion techniques with harmonic analysis approaches to show that hypersurfaces 'near' spheres converge to spheres \cite{DH2013}, and hypersurfaces 'near' cylinders converge to either cylinders or, surprisingly, in higher dimensions to half-period unduloids \cite{DH2016}. Cabezas-Rivas and Miquel \cite{CRM09} study the volume flow in hyperbolic space. \\

\noindent
\textbf{Results}\\

\noindent
{\it{Assumptions:}} In this paper we study (\ref{eq:int_1}) and, except for the volume constraint, we have a free boundary. A convexity assumption would not be natural. Instead, we assume axial symmetry and that the surface meets $\partial D$ orthogonally. This is motivated by the fact that stationary solutions to the associated Euler Lagrange equation of an energy minimising liquid bridge contained in $D$ are axially symmetric and satisfy a Neumann boundary condition.

In particular, we are interested in the formation of singularities for surfaces evolving by (\ref{eq:int_1}). We assume the initial compact $n$-dimensional hypersurface $M_0$  to be smoothly embedded in the domain $D = \left\{ \mathbf{x} \in \mathbb{R}^{n+1}, a \leq x_1 \leq b  \right\}$, $a,b >0 $, with boundary $\emptyset \neq \partial M_0 \subset \partial D$. \\

%We also assume a lower bound on the radius at a certain distance away from the singularity (detailed in Assumption \ref{Assumption_heightbound}).
We study the first singularity that develops under this flow (see \cite{MA2} for conditions under which singularities can develop), and prove that it is of Type I: 

\begin{theorem}\label{Theorem_TypeI}
Let $T>0$ denote the time of the first singularity. Then, for  a $2$-dimensional hypersurface $M_0$, satisfying the above assumptions and evolving under \ref{eq:int_1}, the norm of the second fundamental form $|A|$ satisfies
$$ \max_{M_t} |A|^2 \leq  \frac{C}{T-t} \, , $$
for all $t<T$, and where $C\,$ is a constant.
\end{theorem}

\noindent
Our results complement \cite{MA1}, \cite{MA2} and \cite{AthKan1}. While all our height, gradient, curvature and derivatives of the curvature  estimates (including in Section \ref{Section_Height_Gradient_And_Curvature_Estimates}) are valid for flows in arbitrary dimensions, the final Section \ref{Section_Singularity} makes use of results in \cite{GH2} which is restricted to $2$-dimensional surfaces in $\mathbb{R}^{3}$. We also use the explicit parametrisation of a catenoid in three dimensions in the rescaling argument in that final section.\\

%\noindent
%In \cite{GH2} Huisken obtains type I singularities in the mean curvature flow setting for axially symmetric hypersurfaces with positive mean curvature. In our case, we have no conditions on the curvature, but we have a lower height bound on the boundary of a specific domain.  \\

The paper is organized as follows:\\
\indent In Section \ref{Section_Notation} we introduce notations and definitions, we present the evolution equations for various geometric quantities and we introduce the different regions of the surface used throughout the paper. These different regions are determined by bounds on curvature terms or combinations thereof, and can be studied individually in regards to the formation of the singularity.\\

In Section \ref{Section_Max_Princ} we discuss parabolic maximum principles for non-cylindrical domains. We extend Ecker's (\cite{KEB1}, Proposition $3.1$) and Lumer's \cite{GL872} versions of the maximum principle to our setting, where it is subsequently used to specific regions of the evolving hypersurfaces determined by conditions on the mean curvature. This means that the base domain varies with time and we need to consider boundary data for theses changing domains as well. The application in our setting does not allow for the luxury of previous work where the surface could be reflected across the boundary and be considered as periodic. The proof is presented in an Appendix to facilitate the flow of the arguments for the reader.\\

%As we use the parabolic maximum principle for specific regions of the hypersurface, our base domain changes with time. In addition, when applying the maximum principle to specific regions of the hypersurface, we need to take the boundary data into consideration; in this case we do not have the luxury of previous work, where the surface could be reflected along the boundary at $x=a$ and $x=b$ and considered as periodic. This version of the parabolic maximum principle is particularly useful as it rules out the maximum being attained at specific regions of the parabolic boundary. \\

%In Section \ref{Section_Ev_Eq} we compute the evolution equations for various geometric quantities and we introduce the different regions of the surface used in the rest of the paper. These different regions are determined by bounds on curvature terms or combinations thereof, and can be studied individually in regards to the formation of the singularity.\\

In Section  \ref{Section_Height_Gradient_And_Curvature_Estimates} we prove height, gradient and curvature estimates. We prove that the mean curvature is bounded from below on the entire hypersurface. We prove that that the second fundamental form $|A|$ is bounded for subregions of the surface where the radius is bounded from below, so that singularities can only occur along the axis of rotation. Results in this section hold in any dimension. \\

%the For a specific subregion on the surface, we are able to obtain a lower bound on the radius for all time $t<T$ and show , excluding singularities.\\

%we prove that a specific region has a lower height bound for all time $t<T$ and also the second fundamental form $|A|$ is bounded in that region. We also prove that the mean curvature is bounded from below on the entire hypersurface. \\

%In Section \ref{Section_Sturm} we discuss the linear and non-linear versions of the Sturmian Theorem.  We study the behaviour of zeros of particular geometric functions relating to the curvature and discuss the implications for the geometric behaviour of evolving surfaces in our setting.\\

In Section \ref{Section_Singularity} we prove Theorem \ref{Theorem_TypeI} by studying the different cases in which a singularity can develop. In addition to appropriate application of our previously obtained estimates on geometric quantities, a main ingredient here is a rescaling argument similar to that used in \cite{HS2} adapted to our setting, rescaling from points on the axis of rotation. Parts of this section are based on results of \cite{GH2} that only work in $\mathbb{R}^3$, as well as the explicit parametrisation of catenoids in $\mathbb{R}^3$. \\

\noindent
{\small Acknowledgement. The authors thank Prof. Gerhard Huisken and Dr Ben Andrews for helpful discussions; and for supporting visits to the Max Planck Institute for Gravitational Physics, Germany, and the Australian National University respectively. The authors also thank the anonymous reviewers for their suggestions and comments.}

\section{Notation, evolution equations and definitions}\label{Section_Notation}
\noindent
\subsection{Notations}
We follow Huisken's \cite{GH2} and Athanassenas' \cite{MA1} notation in describing the $n$-dimensional axially symmetric hypersurface. Let $\rho_0:[a,b] \rightarrow  \mathbb{R} $ be a smooth, positive function on the bounded interval $[a,b]$ with $\rho'_0(a) = \rho'_0(b)=0 $. Consider the $n$-dimensional hypersurface $M_0$ in $\mathbb{R}^{n+1} $ generated by rotating the graph of $\rho_0$ about the $x_1$-axis. We evolve $M_0$ along its mean curvature vector while keeping its enclosed volume constant and subject to Neumann boundary conditions at $x_1 = a$ and $x_1 = b$. By definition the evolution preserves axial symmetry. The position vector $\mathbf{x}$ of the hypersurface satisfies the evolution
equation
\begin{align}\label{notEq:1.3}
\frac{d}{dt}\mathbf{x} & = -(H-h)\nu = \mathbf{H} +h\nu  \,  ,  \notag \\
  & =   \Delta{\mathbf{x}}+h\nu
\end{align}

\noindent 
where $\mathbf{H}$ is the mean curvature vector, and since $\Delta \mathbf{x} = \mathbf{H}$, where $\Delta$ denotes the Laplacian on the surface.

Let $\mathbf{i}_1, \ldots, \mathbf{i}_{n+1}$ be the standard basis of $\mathbb{R}^{n+1}$, corresponding to $x_1, \ldots x_{n+1}$ axes, and $\tau_1(t), \ldots, \tau_n(t)$ be a local orthonormal frame on $M_t$ such that 
\[ \left\langle\tau_l(t), \mathbf{i}_1\right\rangle = 0, \hspace{2mm} \text{for} \hspace{2mm} l = 2, \ldots, n\, , \hspace{5mm} \text{and} \hspace{5mm} \left\langle\tau_1(t), \mathbf{i}_1\right\rangle > 0  \, .\]

\noindent
Let $ \omega= \frac{\hat{\mathbf{x}}}{|\hat{\mathbf{x}}|} \in \mathbb{R}^{n+1}$ 
denote the unit outward normal to the cylinder intersecting $M_t$ at the point $\mathbf{x}(l,t)\, ,$ where $\hat{\mathbf{x}}= \mathbf{x} - \left\langle \mathbf{x}, \mathbf{i}_1 \right\rangle \mathbf{i}_1$.  Let 

$$y = \langle \mathbf{x}, \omega \rangle \hspace{3 mm} \text{and}  \hspace{3mm} v=\left\langle \omega, \nu \right\rangle ^{-1}\, .$$ 
We call $y$ the { \it height function} and $v$ the {\it gradient function}. We note that $\rho(x_1, t)$ is the radius function such that $\rho: [a, b] \times [0, T) \rightarrow \bigR$, whereas $y(l,t)$ is the height function and $y: M^n \times [0, T) \rightarrow \bigR $.
We note that $v$ is a geometric quantity, related to the inclination angle; in particular $v$ corresponds to $\sqrt{1+ \rho'^2}$ in the axially symmetric setting. The quantity $v$  has facilitated results such as gradient estimates in graphical situations (see for example \cite{EH89,Eck97Mink} ).

\noindent
We introduce the quantities (see also \cite{GH2} )
\begin{equation}\label{notEq:1.1}
 p =  \left\langle\tau_1, \mathbf{i}_1 \right\rangle y^{-1},  \hspace{10 mm} q= \left\langle\nu, \mathbf{i}_1 \right\rangle y^{-1}, 
\end{equation}

\noindent
so that
\begin{equation}\label{notEq:1.2}
p^2 + q^2 = y^{-2} \, .% \hspace{10mm} \nabla_1y = -qy.
\end{equation}

\noindent
%Here $p$ is the principle curvature around the $n-1$ dimensional spheres shown in figure \ref{Fig_Notation}. 
The second fundamental form has $n-1$ eigenvalues equal to $p = \frac{1}{\rho \sqrt{1+ \rho'^2}}$ and one eigenvalue equal to
\[ k= \left\langle \overline{\nabla}_{1} \nu, \tau_1 \right\rangle = \frac{-\rho''}{(1+\rho'^2)^{3/2}}. \]

\noindent
There are cases where singularities develop in the axially symmetric setting (see \cite{MA2}). Here, we assume that a singularity develops for the first time at $t=T < \infty$. 

\subsection{Evolution equations}
\noindent
We have the following evolution equations: %The computation of the evolution equations is very similar to that of (\cite{GH2}, Lemma $5.1$ ) and (\cite{MA1}, Lemma $3$ ).
\begin{lemma}\label{Lemma_evolutionEqs}
We have the following evolution equations:
\end{lemma}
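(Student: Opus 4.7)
The plan is to compute each evolution equation directly from the flow \eqref{notEq:1.3}, following the template of Huisken \cite{GH2} Lemma~5.1 and Athanassenas \cite{MA1} Lemma~3, with the modifications forced by the non-local term $h(t)\nu$. First I would record the three workhorse identities that drive everything else: under a deformation $\partial_t \mathbf{x}=-(H-h)\nu$ one has
\[
\frac{d}{dt} g_{ij}=-2(H-h)A_{ij},\qquad \frac{d}{dt}\nu=\nabla H,\qquad \frac{d}{dt}d\mu_t=-(H-h)H\,d\mu_t,
\]
so $h'(t)$ can be written explicitly in terms of integrals of $H$, $H^2$, $|A|^2$ over $M_t$. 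This gives the usual Simons-type equations with an extra $h$-contribution:
\[
\bigl(\tfrac{d}{dt}-\Delta\bigr)H=|A|^2(H-h),\qquad \bigl(\tfrac{d}{dt}-\Delta\bigr)|A|^2=-2|\nabla A|^2+2|A|^4-2h\operatorname{tr}(A^3).
\]

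Next I would turn to the geometric quantities special to the axially symmetric setting. The height $y=\langle\mathbf{x},\omega\rangle$ is handled by differentiating $\omega=\hat{\mathbf{x}}/|\hat{\mathbf{x}}|$ along the flow, noting that $\omega$ is parallel to the evolution in its own direction, and exploiting $\Delta\mathbf{x}=\mathbf{H}=-H\nu$ to express $\Delta y$ in terms of $y$, $p$ and $q$; this should yield something of the form $(\tfrac{d}{dt}-\Delta)y = -yp^2 + hv^{-1}$ modulo signs, matching \cite{MA1}. For the gradient function $v=\langle\omega,\nu\rangle^{-1}$ I would combine $\partial_t\nu=\nabla H$ with the evolution of $\omega$ and the identity $\nabla_i\nu=A_i{}^j\partial_j\mathbf{x}$; a standard computation gives $(\tfrac{d}{dt}-\Delta)v = -|A|^2 v - 2v^{-1}|\nabla v|^2$ with no $h$-term (because $h\nu$ is normal, so contributes $h\langle\omega,\partial_t\nu\rangle$ that cancels against the $\omega$-variation, as in \cite{GH2}). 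For $p=\langle\tau_1,\mathbf{i}_1\rangle y^{-1}$ and $q=\langle\nu,\mathbf{i}_1\rangle y^{-1}$, I would differentiate using $\partial_t\tau_1$ obtained from orthonormalizing $\tau_1$ along the flow, and use $p^2+q^2=y^{-2}$ to cross-check the resulting expressions, ultimately arriving at evolution equations of Huisken's form augmented by the predictable $h$-terms.

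The only steps I expect to be delicate are bookkeeping rather than conceptual. The first is the $h$-bookkeeping: because $h\nu$ is a normal vector field, its contribution to $\partial_t g_{ij}$ is of the same shape as the $H\nu$ term, so it simply replaces $H$ by $H-h$ in the first-variation pieces; however, in $\partial_t H$ the $h$ enters via $\Delta(-h)=0$ and an algebraic $h|A|^2$ term, which has to be carried through carefully. The second is handling the frame $\tau_1,\dots,\tau_n$: since $\tau_1$ is defined by projection onto $\mathbf{i}_1$ and re-normalisation, its time derivative has tangential corrections that must be isolated before one can identify the Laplacian part cleanly; here I would choose at a point a geodesic frame with $g_{ij}=\delta_{ij}$ and $\Gamma^k_{ij}=0$, exactly as in \eqref{eq:Max_Prin1}, so that the corrections drop out and one recovers the stated Huisken/Athanassenas formulas with the $h$-modifications.

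Finally I would verify compatibility with the boundary conditions: along the boundary planes $x_1=a$, $x_1=b$ the Neumann condition $\langle\nu,\mathbf{i}_1\rangle=0$ forces $q=0$ and $p=y^{-1}$, and the evolution equations derived above should respect this, which serves as an internal consistency check and also sets up the application of Proposition~\ref{Prop_Max_Principles} in later sections.
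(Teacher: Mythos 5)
Your high-level strategy --- direct computation following Huisken (\cite{GH2}, Lemma 5.1) and Athanassenas (\cite{MA1}, Lemma 3), with $h\nu$ propagated through the same variation formulas --- is essentially the same as the paper's. Your workhorse identities $\frac{d}{dt}g_{ij}=-2(H-h)A_{ij}$, $\frac{d}{dt}\nu=\nabla H$, and $\frac{d}{dt}d\mu_t=-(H-h)H\,d\mu_t$ are correct, and your Simons-type equations for $H$ and $|A|^2$ match (i)--(vii) once you observe $\mathscr{C}=\operatorname{tr}(A^3)$. But two of the formulas you write down as targets are wrong, and these are not just sign slips; if you carried out the computation under the belief that these are the answers, it would not close.

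First, the height function. The correct evolution is $\bigl(\tfrac{d}{dt}-\Delta\bigr)y=-(n-1)y^{-1}+hpy$, whereas you claim $-yp^2+hv^{-1}$. The $h$-terms agree (since $v^{-1}=py$), but $-yp^2 = -(v^2y)^{-1}$ is not equal to $-(n-1)y^{-1}$ in general; they coincide only when $v=1$ (i.e.\ $\rho'=0$). The $-(n-1)y^{-1}$ term comes from the trace identity $|\langle\tau_1,\mathbf{i}_1\rangle|^2+|\nabla y|^2 = 1$ used when simplifying $\Delta y$; if you compute $\Delta y$ without this identity and carelessly, you can land on $-yp^2$, so it is a good diagnostic that your $\Delta y$ bookkeeping is off. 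Second, your evolution for $v$ is missing the term $(n-1)\,v/y^2$: the correct equation is $\bigl(\tfrac{d}{dt}-\Delta\bigr)v=-|A|^2v+(n-1)\,v\,y^{-2}-2v^{-1}|\nabla v|^2$. This extra positive term is not a cosmetic omission --- it is exactly the term that makes the volume-preserving (and free-boundary) gradient estimate nontrivial, and it reflects the curvature of the cylinders of constant $y$; the $h$-independence of the $v$-equation is correct, but the missing term is not.

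Two further remarks. Your plan to compute $\partial_t\tau_1$ for the $p$- and $q$-equations is workable but substantially more involved than what the paper does: the paper derives (iii) from the product rule on $\langle\nu,\mathbf{i}_1\rangle=qy$ together with the known evolution $\frac{d}{dt}\langle\nu,\mathbf{i}_1\rangle=\Delta\langle\nu,\mathbf{i}_1\rangle+|A|^2\langle\nu,\mathbf{i}_1\rangle$, and then gets (iv) algebraically from $p=(y^{-2}-q^2)^{1/2}$, avoiding any explicit differentiation of the frame vector; if you insist on differentiating $\tau_1$ you will have to isolate and cancel tangential reparametrisation terms, which is exactly the bookkeeping you flag as delicate and can be sidestepped. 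Finally, you do not address (ix), the radius-function equation $\frac{d}{dt}\rho=\rho''/(1+\rho'^2)-(n-1)/\rho+h\sqrt{1+\rho'^2}$, which the paper cites from \cite{MA2}; this equation is used later (e.g.\ Corollary~\ref{Cor_rho_Double_Dashed_Zeros}), so your proof plan should at least state how it is obtained or cited.
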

\begin{itemize}
%(1) d/dt <x, i1>
\item[(i)] $\frac{d}{dt}\left\langle \mathbf{x}, i_1 \right\rangle = 
\Delta \left\langle \mathbf{x}, i_1 \right\rangle + hqy \, ; $
%(2) d/dt y
\item[(ii)] $\frac{d}{dt} y = -(H-h)py =\Delta y - \frac{n-1}{y} + hpy \, ; $ \label{eq:alt_y}
%(3) d/dt q
\item[(iii)] $\frac{d}{dt} q = \Delta q + |A|^2q + q((n-1)p^2 + (n-3)q^2 - 2kp) - hpq \, ;  $
%(4) d/dt p
\item[(iv)] $ \frac{d}{dt} p = \Delta p + |A|^2p + 2q^2(k-p) - hp^2 \, ;  $
%(5) d/dt k
\item[(v)] $ \frac{d}{dt} k = \Delta k + |A|^2k -2(n-1)q^2(k-p) - hk^2 \, ;  $
%(6) d/dt H
\item[(vi)]$ \frac{d}{dt} H = \Delta H + (H -h) |A|^2 \, ; $
%(7) d/dt |A|^2
\item[(vii)] $ \frac{d}{dt}  |A|^2= \Delta|A|^2 -2|\nabla A|^2 + 2 |A|^4 - 2h\mathscr{C}\, ;  $
%(8) d/dt v
\item[(viii)]$ \frac{d}{dt} v = \Delta v  -|A|^2v + (n-1)\frac{v}{y^2} - \frac{2}{v}|\nabla v|^2\, ;  $
%(9) d/dt \rho
\item[(ix)] $\frac{d}{dt} \rho = \frac{\rho''}{1+ \rho'^2} - \frac{n-1}{\rho} + h \sqrt{1+ \rho'^2}  \, ;$
\end{itemize}
where $\mathscr{C} = g^{ij}g^{kl}g^{mn}h_{ik}h_{lm}h_{nj}\, .$

\begin{proof}
The evolution equations are either proved in \cite{GH3}, \cite{MA1}, \cite{MA2}, and \cite{AthKan1} or similar to those in \cite{GH2}. Equations (ii), (vi), (vii) and (viii) are derived in (\cite{MA1}, Lemma $3$), where for (ii) we use $ \left\langle \nu, \omega \right\rangle = \frac{1}{v} = py $. Equations (iv) and (v) are derived in \cite{AthKan1} and  (ix) in  \cite{MA2}. Equation (i) follows from (\ref{eq:int_1}) using (\ref{notEq:1.3}). Equations (iii) is as in \cite{GH2} (Lemma $5.1$) adjusted to the volume constraint.

\subsection{Bounds on $h$}
\noindent
We state \cite{MA2} Proposition $1.4$ here. 
\begin{proposition}\label{Maria's Prop 1.4}{\bf (Athanassenas).} Assume $\{M_t\}$ to be a family of smooth, rotationally
symmetric surfaces, solving \eqref{eq:int_1} for $t \in [0, T) \, . $ Then the mean value $h$ of the mean curvature satisfies
$$0 < c_2 \leq  h \leq c_3 \, , $$
with $c_2$ and $c_3$ constants depending on the initial hypersurface $M_0$.
\end{proposition}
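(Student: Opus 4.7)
\noindent
The plan is to bound $h$ above and below by combining volume preservation, the area-decreasing property of the flow, the Neumann boundary condition, and an explicit identity for $\int_{M_t}H\,dg_t$ in the axially symmetric setting. Throughout I write $\omega_{n-1}$ for the area of the unit sphere in $\bigR^n$.

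The first step is to control $|M_t|$. Along the flow, $\frac{d}{dt}|M_t|=-\int_{M_t}(H-h)^2\,dg_t\le 0$, hence $|M_t|\le|M_0|$. For the lower bound, reflect $M_t$ periodically across $x_1=a$ and $x_1=b$ to obtain a closed axially symmetric hypersurface in $\bigR^{n+1}$ enclosing the fixed volume $V$ in each period, and apply an isoperimetric-type inequality (or direct comparison to the cylinder of the same volume) to get $|M_t|\ge c_1(V,b-a,n)>0$. A byproduct of this step is an upper bound on $\max_{x_1}\rho$ depending only on $M_0$: using $\rho^{n-1}|\rho'|\le \rho^{n-1}\sqrt{1+\rho'^2}$ and the intermediate value theorem yields $(\max\rho)^n-(\min\rho)^n\le n|M_t|/\omega_{n-1}$, while $V=(\omega_{n-1}/n)\int_a^b\rho^n\,dx_1$ forces $(\min\rho)^n\le nV/((b-a)\omega_{n-1})$.

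Next I would derive an explicit identity for $\int_{M_t}H\,dg_t$. In the axially symmetric parametrization, $dg_t=\omega_{n-1}\rho^{n-1}\sqrt{1+\rho'^2}\,dx_1$ and $H=-\rho''/(1+\rho'^2)^{3/2}+(n-1)/(\rho\sqrt{1+\rho'^2})$, so
\[
\int_{M_t}H\,dg_t=\omega_{n-1}\int_a^b\left[-\frac{\rho''\rho^{n-1}}{1+\rho'^2}+(n-1)\rho^{n-2}\right]dx_1.
\]
Integrating the first term by parts against $\tfrac{d}{dx_1}\arctan\rho'=\rho''/(1+\rho'^2)$, the Neumann condition $\rho'(a,t)=\rho'(b,t)=0$ kills the boundary contribution and yields
\[
\int_{M_t} H\,dg_t=(n-1)\omega_{n-1}\int_a^b \rho^{n-2}\bigl[1+\arctan(\rho')\,\rho'\bigr]\,dx_1.
\]
Since $\arctan(\rho')\rho'\ge 0$, this is at least $(n-1)\omega_{n-1}\int\rho^{n-2}\,dx_1\ge n(n-1)V/(\max\rho)^2$ by the pointwise estimate $\rho^n\le(\max\rho)^2\rho^{n-2}$ applied to the volume integral. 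Dividing by $|M_t|\le|M_0|$ and using the $L^\infty$-bound on $\rho$ from Step~1 gives the lower bound $h\ge c_2>0$.

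The upper bound $h\le c_3$ is the subtler half. From the same identity, combined with $\arctan(\rho')\rho'\le\rho'^2$,
\[
\int_{M_t}H\,dg_t\le (n-1)\omega_{n-1}\int_a^b\rho^{n-2}(1+\rho'^2)\,dx_1,
\]
so the task reduces to a uniform bound on $\int\rho^{n-2}\rho'^2\,dx_1$. Integrating by parts against $(\rho^{n-1})'=(n-1)\rho^{n-2}\rho'$ rewrites this as $-\tfrac{1}{n-1}\int\rho^{n-1}\rho''\,dx_1$, which can be coupled back to $\int H\,dg_t$ and to terms controlled by the area and the $L^\infty$-bound on $\rho$ obtained in Step~1. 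Dividing the resulting bound on $\int_{M_t}H\,dg_t$ by $|M_t|\ge c_1$ then yields $h\le c_3$. The main obstacle is that naive pointwise estimates introduce factors of $1/\min_{x_1}\rho$ that degenerate near a pinch-off singularity; the way around this is to argue globally on the reflected closed periodic surface and to phrase the estimates in terms of integral quantities controlled by the area--volume pair, rather than by pointwise values of $\rho$.
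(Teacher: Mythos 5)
The paper does not prove this proposition; it is stated and cited from Athanassenas' earlier work (\cite{MA2}, Proposition 1.4). So there is no in-paper proof to compare against, and your argument can only be judged on its own merits.

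Your lower bound is sound. The identity
\[
\int_{M_t}H\,dg_t=(n-1)\,\omega_{n-1}\int_a^b\rho^{\,n-2}\bigl[1+\rho'\arctan(\rho')\bigr]\,dx_1
\]
is correct (the boundary term vanishes by the Neumann condition), $\rho'\arctan\rho'\ge 0$, and combining $\int\rho^{n-2}\ge\int\rho^n/(\max\rho)^2$, the fixed-volume identity, the decreasing-area fact $|M_t|\le|M_0|$, and the uniform bound on $\max\rho$ you derive in Step 1 yields $h\ge c_2>0$. (Incidentally, the isoperimetric lower bound $|M_t|\ge c_1$ is not needed for this half -- dividing by a smaller area only helps -- but it is of course essential for the upper bound.)

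The upper bound, however, has a genuine gap. After applying $\rho'\arctan\rho'\le\rho'^2$ you reduce to bounding $\int\rho^{n-2}\rho'^2\,dx_1$, rewrite it by parts as $-\frac{1}{n-1}\int\rho^{n-1}\rho''\,dx_1$, and assert that this ``can be coupled back to $\int H\,dg_t$ and to terms controlled by the area.'' That coupling does not close: the term that actually appears in $\int H\,dg_t$ is $-\int\rho^{n-1}\rho''/(1+\rho'^2)\,dx_1$, and the two differ by a factor of $(1+\rho'^2)$, which is exactly what blows up near a neck. Carrying the substitution $-\rho''=\bigl(H-(n-1)/(\rho\sqrt{1+\rho'^2})\bigr)(1+\rho'^2)^{3/2}$ through gives
\[
2(n-1)\int\rho^{n-2}\rho'^2\,dx_1+(n-1)\int\rho^{n-2}\,dx_1=\int\rho^{n-1}H(1+\rho'^2)^{3/2}\,dx_1,
\]
whose right-hand side is not $\int H\,dg_t$ and is not controlled by the area--volume data alone; the circularity is never broken. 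There is also a loss already in the choice $\rho'\arctan\rho'\le\rho'^2$ where $\rho'$ is large; the sharper bound $\rho'\arctan\rho'\le\tfrac{\pi}{2}|\rho'|$ reduces the problem to $\int\rho^{n-2}|\rho'|\,dx_1=\frac{1}{n-1}\,\mathrm{TV}(\rho^{n-1})$, a first-order quantity, rather than a second-order one. But even that total variation is not controlled by area and volume in general -- a surface with many thin oscillations can have bounded area and enclosed volume while $\mathrm{TV}(\rho^{n-1})$ is arbitrarily large. The missing ingredient is a bound on the number of critical points of $\rho$, which in this paper's setting comes from the Sturmian theorem (the zero count of $\rho'$ is finite and nonincreasing; see Section~\ref{Section_Sturm} and \cite{MA2}). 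With that bound and $\max\rho\le R$ one gets $\mathrm{TV}(\rho^{n-1})\le c(N_0)R^{n-1}$ and the upper bound for $h$ follows. As written, your argument acknowledges the pinch-off obstruction in the final sentence but does not actually supply the idea that overcomes it.
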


This is an important result that will be used repeatedly in our paper. 

\subsection{Different regions of the volume flow surface }
\noindent
Depending on the situation, we are interested in different parts of the hypersurface; therefore we subdivide as follows :

% -------------------------------------------------------------------
% BEGIN COMMENT
\begin{comment}
\subsubsection{The regions $\breve\Omega_t$ and $\hat\Omega_t$} 
%\noindent
Let
$$  \breve\Omega_t =\left\{\mathbf{x}(l,t) \in M_t : H(l,t) \leq c_2 \right\} \hspace{3mm} \text{and} \hspace{3mm} \breve\Omega = \bigcup_{t<T} \breve\Omega_t \, , \hspace{3mm}  $$

$$ \text{and} \hspace{3mm}  \hat\Omega_t =\left \{\mathbf{x}(l,t) \in M_t : H(l,t) > c_2 \right\} \hspace{3mm} \text{and} \hspace{3mm} \hat\Omega = \bigcup_{t<T} \hat\Omega_t \, , \hspace{3mm}  $$

\noindent
such that $M_t = \breve\Omega_t \cup \hat\Omega_t $. In view of Proposition \ref{Maria's Prop 1.4} and equation \eqref{eq:alt_y} we can see that in the region $\breve\Omega$ the height function is bounded from below.
\end{comment}
% END COMMENT
% ----------------------------------------------------------------------

\subsubsection{The regions $\breve\Omega_t \, , \hat\Omega_t$ and $\breve\Omega'_t$ }
Let 
$$ \breve{\Omega}_t =\left\{\mathbf{x}(l,t) \in M_t : H(l,t) \leq \frac{c_2}{2}\right\} \,  \hspace{3mm} \text{and} \hspace{3mm} \breve\Omega = \bigcup_{t < T}  \breve{\Omega}_t \, , $$
$$ \hat{\Omega}_t = \left\{\mathbf{x}(l,t) \in M_t : H(l,t) > \frac{c_2}{2}\right\} \,  \hspace{3mm} \text{and} \hspace{3mm} \hat\Omega = \bigcup_{t < T} \hat{\Omega}_t \, , $$
such that $M_t = \breve{\Omega}_t \cup \hat{\Omega}_t$. We also define 
$$  \breve\Omega'_t = \left \{\mathbf{x}(l,t) \in M_t : H(l,t) \leq c_2 - \delta , \delta > 0 \right \}\,  \hspace{3mm} \text{and} \hspace{3mm}    \breve\Omega' =  \bigcup_{t < T} \breve\Omega'_t \, ,$$ 
\noindent
which will be used occasionally. 

% -------------------------------------------------------------------
% BEGIN COMMENT
\begin{comment}
\subsubsection{The regions $\Omega_{t}^{k+}$ } % and $\Omega_{t}^{k-}$
Let
$$\Omega_{t}^{k+} = \{ \mathbf{x}(l,t) \in M_t: k (l,t) \geq 0\} \cap \{ \mathbf{x}(l,t) \in M_t: H(l,t) \geq 0 \}  \, . %, \hspace{3mm} \text{and} $$
%$$ \Omega_{t}^{k-} = \{ \mathbf{x}(l,t) \in M_t: k( l,t) < 0\} \cap \{\mathbf{x}(l,t) \in M_t: H(l,t) \geq 0 \}  \,.
$$
\noindent
\end{comment}
% END COMMENT
% ----------------------------------------------------------------------

\section{Maximum Principles}\label{Section_Max_Princ}
\noindent
We are interested in maximum principles for non-cylindrical domains in order to be able to work on sub-regions of the hypersurface. This section is an extension of Ecker's (\cite{KEB1}, Proposition $3.1$) and Lumer's \cite{GL872} version of maximum principles to our setting. In \cite{GL872} the maximum principles are proved in an operator theoretic setting, which has been adapted to the manifold setting here. \\

Let $\Omega = M^n$. Let $V \subset \Omega \times (0, T)$ be an open non-cylindrical domain. Let $\Omega_t = \Omega \times \{ t \} \, ,$ and for $t \neq 0 $ let  $V_t = \Omega_t \cap V $, the cross sections of $V$ for constant $t$. Let $\overline{V}$ denote the closure of $V$ and $V_0 = \Omega_0 \cap \overline{V} \, .$  The boundary of $V$ \, is $\partial V = \overline{V} \backslash V \, .$ The parabolic boundary is $\Gamma_V = \partial V \backslash \Omega_T  \, .$ To describe the horizontal parts of the boundary of $V$ in the space-time diagram, we define the following: let $Z_t$ be the largest subset of $\Omega_t \cap \partial V $ that is open in $\partial V$ and can be reached from ``below'' (with $t$ the vertical axis) in $V$ . Let $Z_V = \bigcup_{0 < t < T } Z_t $ and $\delta_V =  \Gamma_V \backslash Z_V  \, .$

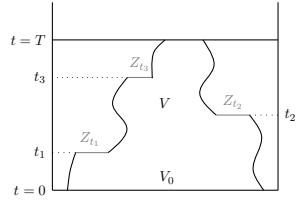
\begin{figure}[h!]
   \centering
   \scalebox{0.5}{\pagestyle{empty}
\newrgbcolor{xdxdff}{0.49 0.49 1}
\psset{xunit=1.0cm,yunit=1.0cm,dotstyle=o,dotsize=3pt 0,linewidth=0.8pt,arrowsize=3pt 2,arrowinset=0.25}
\begin{pspicture*}(-4.3,-0.76)(3.86,5.64)
\psline(-3,0)(-3,5)
\psline(3,0)(3,5)
\psline(-3,0)(3,0)
\psline(-3,4)(3,4)
\pscurve(-2.6,0)(-2.52,0.56)(-2.38,1)
\psline[linecolor=gray](-2.38,1)(-1.52,1)
\pscurve(-1.52,1)(-1.02, 1.6)(-1.4, 2.32)(-1,3)
\psline[linecolor=gray](-1,3)(-0.34,3)
\pscurve(-0.34,2.98)(-0.3, 3.68)(0,4)
\pscurve(1,4)(1.3, 3.36)(0.92, 2.56)(1.36,2)
\psline[linecolor=gray](1.36,2)(2.24,2)
\pscurve(2.24,2)(2.56, 1.46)(2.34, 0.52)(2.62,0)
\psline[linestyle=dotted](-2.38,1)(-3,1)
\psline[linestyle=dotted](-1,3)(-3,3)
\psline[linestyle=dotted](2.24,2)(3,2)
\uput[180](-3,0){$ t= 0 $}
\uput[180](-3,1){$t_1$}
\uput[180](-3,3){$t_3$}
\uput[180](-3,4){$t = T$}
\uput[0](3, 2){$t_2$}
\uput[90](0,0){ $V_0$}
\uput[90](-2,1){\gray  $Z_{t_1}$}
\uput[90](-0.68,3){\gray  $Z_{t_3}$}
\uput[90](1.8,2){\gray  $Z_{t_2}$}
\uput[90](0, 2){$V$}
\end{pspicture*}}
		\caption{The non-cylindrical domain $V$, with $\delta_V$ indicated by a darker line for $t<T$.}
		\label{fig_nonCylindrical1}
\end{figure}

\begin{proposition}\label{Prop_Max_Principles}{\bf(Non-Cylindrical Maximum Principle )}
Let $\left(M_t \right)_{t \in (0, T)}$ be a solution of the volume preserving mean curvature flow \eqref{eq:int_1} consisting of hypersurfaces $M_t=\mathbf{x}_t(\Omega)$, where $ \mathbf{x}_t = \mathbf{x}( \cdot, t) : \Omega \times [0, T) \rightarrow \bigR^{n+1}$ and $\Omega$ is compact. Suppose $f \in  C^{2,1}(V) \cap C(\overline{V}) $  satisfies an inequality of the form
$$ \left( \frac{d}{d t} - \Delta \right) f \leq \langle a , \nabla f \rangle \, ,
$$ 
where the Laplacian $\Delta$ and the gradient $\nabla$ are computed on the manifold $M_t$. For the vector field $a: V \rightarrow \bigR^{n+1}$ we only require that it is continuous in a neighbourhood of all maximum points of $f$ . Then
$$ \sup_V f \leq  \sup_{\Gamma_V} f \, ,
$$
for all $t \in [0, T)$.  \\
Assuming $f$ to have a positive supremum in $V$ then
$$ \sup_V f \leq \sup_{\delta_V} f \, ,
$$
for all $t \in [0, T) \, .$
\end{proposition}
For the convenience of the reader, the proof is included in the Appendix.

%\section{Evolution Equations and Different Regions of the Hypersurface}\label{Section_Ev_Eq}

\section{Height, Gradient and Curvature Estimates}\label{Section_Height_Gradient_And_Curvature_Estimates}

In this section we prove radius estimates from below in $\breve\Omega$; various curvature estimates, including for the ratio $\frac{|k|}{p}$ of the principal curvatures; and, for the norm $|A|$ of the second fundamental form on any subregion of the evolving hypersurface away from the axis of rotation.

\subsection{Height estimates}
\noindent
The first author proves in ( \cite{MA1}, 2A Remark (iii)) that the height $y$ satisfies
$$ y \leq R \, , $$
\noindent
for some $R>0$ determined by the initial hypersurface $M_0$. \\
\noindent
We will show that the height function $y$ has a lower bound in the region $\breve\Omega$. 

%--------------------------------------------------------------------------------------
\begin{lemma}\label{Lemma_yLowerBound}There exist constants $c , c' >0 $  such that $\inf_{ \breve\Omega} y = \inf_{\Gamma_{ \breve{\Omega}}} y \geq c$ and  $ \inf_{ \Gamma_{ \hat{\Omega}}} y \geq c' \, ,$ where $\Gamma_{ \breve{\Omega}}$ and $\Gamma_{ \hat{\Omega}}$ denote the parabolic boundary of $\breve{\Omega}$ and $\hat{\Omega}$ (see figure \ref{figure_yLowerBound}) respectively. 
\end{lemma}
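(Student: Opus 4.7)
The plan is to use the alternate form \eqref{evEq:1.7} of the evolution equation for $y$,
\[
\tfrac{\partial y}{\partial t} = -(H-h)\,p\,y,
\]
combined with the lower bound $h \geq c_2 > 0$ from Proposition \ref{Maria's Prop 1.4}. On $\breve\Omega'$ the defining inequality $H \leq c_2/2$ gives $H - h \leq -c_2/2$, and positivity of $p$ and $y$ then yields
\[
\tfrac{\partial y}{\partial t} \geq \tfrac{c_2}{2}\,p\,y > 0
\]
throughout $\breve\Omega'$. The same computation gives the weaker monotonicity $\partial_t y \geq 0$ on the larger region $\breve\Omega = \{H \leq c_2\}$, since $H \leq c_2 \leq h$ there.

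With this strict monotonicity, I would first deduce that $\inf_{\breve\Omega'} y = \inf_{\Gamma_{\breve\Omega'}} y$: if the infimum were attained at an interior point $(l_0, t_0) \in \breve\Omega'$, then $(l_0, t_0 - \epsilon) \in \breve\Omega'$ for small $\epsilon > 0$ with $y(l_0, t_0 - \epsilon) < y(l_0, t_0)$, a contradiction. (Equivalently, one may invoke Proposition \ref{Prop_Max_Principles} applied to $f = -y$ after rewriting $(\partial_t - \Delta)(-y) = (n-1)/y - hpy$ via evolution equation (ii) and absorbing the zero-order term as a drift, which is possible since $\nabla_i y = \delta_{i1}\,qy$ from \eqref{evEq:1.4}.) To get the quantitative bound, I would decompose $\Gamma_{\breve\Omega'}$ into the initial slice $\breve\Omega'_0$, the lateral moving boundary $\mathcal{L} = \{H = c_2/2,\; 0 < t < T\}$, and any Neumann component. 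On $\breve\Omega'_0$ the smoothness and positivity of $\rho_0$ give $y \geq \min_{[a,b]}\rho_0 =: c_0 > 0$. On $\mathcal{L}$, each point lies inside $\breve\Omega$, where the monotonicity $\partial_t y \geq 0$ lets me trace the trajectory $s \mapsto (l,s)$ backward in time: by continuity of $H$ it was in $\breve\Omega \setminus \breve\Omega'$ just before hitting $\mathcal{L}$, and iterating across any further crossings of $\{H = c_2\}$ reduces the bound to the initial one, $y \geq c_0$. The Neumann piece is handled identically using $\rho'(a,\cdot) = \rho'(b,\cdot) = 0$. Thus $\inf_{\Gamma_{\breve\Omega'}} y \geq c$ for some $c > 0$.

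The bound $\inf_{\Gamma_{\hat\Omega'}} y \geq c'$ follows at once: the lateral moving boundary of $\hat\Omega'$ coincides with $\mathcal{L}$ (already controlled), the initial slice $\hat\Omega'_0$ satisfies $y \geq c_0$, and the Neumann piece is handled as before; setting $c' = \min(c_0, c)$ closes the proof.

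The main obstacle is the lateral-boundary step. A trajectory can in principle cross $\{H = c_2\}$ multiple times, and in $\hat\Omega = \{H > c_2\}$ the monotonicity of $y$ may fail. This is resolved by combining the finiteness of such crossings on any $[0, T-\epsilon]$ (a consequence of smoothness of the flow on $[0,T)$ together with strict transversality at generic crossings) with the second part of Proposition \ref{Prop_Max_Principles}, whose $\delta_V$-refinement is tailored precisely to exclude the accessible-from-below horizontal portions of the parabolic boundary that are the source of the difficulty.
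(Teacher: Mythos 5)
Your opening steps -- the monotonicity $\tfrac{dy}{dt} = -(H-h)py > 0$ on $\breve\Omega'$ (since $H \le c_2/2 < c_2 \le h$ there), and the deduction $\inf_{\breve\Omega'} y = \inf_{\Gamma_{\breve\Omega'}} y$ -- match the paper exactly. The gap is in your lateral-boundary step, and it is a genuine one. You propose to trace the trajectory $s \mapsto (l,s)$ backward from a point of $\mathcal L = \{H = c_2/2\}$ all the way to $t=0$, iterating across crossings of $\{H=c_2\}$. But once the trajectory enters $\hat\Omega = \{H>c_2\}$, the sign of $\tfrac{dy}{dt} = -(H-h)py$ is indeterminate: Proposition~\ref{Maria's Prop 1.4} only gives $c_2 \le h \le c_3$, and for $c_2 < H < c_3$ the quantity $H-h$ can have either sign. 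So an excursion through $\hat\Omega$ can, in principle, drive $y$ down arbitrarily, and ``iterating across crossings'' does not reduce anything to the initial bound. Your two proposed patches do not close this. Finiteness of crossings of $\{H=c_2\}$ on $[0,T-\epsilon]$ is of no help: a single excursion already loses the quantitative control, and there is no reason crossings cannot accumulate as $t\to T$. And Proposition~\ref{Prop_Max_Principles} cannot be applied to $f=-y$: the $\delta_V$ refinement requires $f$ to have a \emph{positive} supremum, while $-y<0$ everywhere; moreover the zero-order term $(n-1)/y$ in $(\partial_t-\Delta)(-y) = (n-1)/y - hpy$ has the wrong sign for a supremum bound and cannot be absorbed as a drift via $\nabla_1 y = qy$ at critical points of $\rho$, where $q=0$.

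The paper avoids the trace-back entirely by a \emph{local} contradiction argument. Suppose $\inf_{\Gamma_{\breve\Omega'}} y = 0$ is attained (or approached) at $(l_0,t_0)\in\overline{\breve\Omega'}$. Because $(l_0,t_0)\in\overline{\breve\Omega'}$, one has $H(l_0,t_0)\le c_2/2 < c_2$, and by continuity of $H$ a spacetime neighbourhood $N$ of $(l_0,t_0)$ with $t<t_0$ satisfies $H<c_2\le h$, hence $N\subset\breve\Omega^{\mathrm{int}}$ and $\tfrac{dy}{dt}>0$ on $N$. But $y$ approaching zero at $(l_0,t_0)$ while remaining positive for earlier times forces $\tfrac{dy}{dt}<0$ somewhere in $N$ -- contradiction. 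The crucial observation you are missing is that one never needs to follow the trajectory out of $\breve\Omega$: the hypothesis $(l_0,t_0)\in\overline{\breve\Omega'}$ already pins $H$ down locally, so the problematic excursions into $\hat\Omega$ simply do not arise near the alleged zero of $y$. For the second bound the paper then notes that, viewing $M_t$ as periodic, $\partial\breve\Omega'_t=\partial\hat\Omega'_t$, so $\Gamma_{\hat\Omega'}\setminus\hat\Omega'_0 = \Gamma_{\breve\Omega'}\setminus\breve\Omega'_0$ is already controlled, and $\inf_{M_0}y>0$ handles the initial slice.
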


\begin{figure}[h]
   \centering
   %----------------------------------------------------
% code for the picture in geogebra
\pagestyle{empty}
\psset{xunit=1.0cm,yunit=1.0cm,dotstyle=o,dotsize=3pt 0,linewidth=0.8pt,arrowsize=3pt 2,arrowinset=0.25}
\begin{pspicture*}(-3.28,-0.84)(6.58,6.3)
\psline(-2,5)(-2,0)
\psline(5,5)(5,0)
\psline(-2,4)(5,4)
\psline(-2,0)(5,0)
\psarc[showpoints=true](2,2.5){0.4}{180}{360}
\uput[180](-2,0){$t =0$}
\uput[180](-2,2){$t$}
\uput[180](-2,4){$t =T$}
\uput[-90](1,0){$\Omega_0$}
\uput[60](2,2.5){\tiny $\mathbf{x}(l_0, t_0)$}
\psdot*(2,2.5)
\uput[-110](2,2.5){\tiny $N$}
\psarc[showpoints=true](1.75, 4){0.4}{180}{360}
\uput[90](1.75, 4){\tiny $\mathbf{x}(l_1, T)$}
\uput[-110](1.75, 4){\tiny $N$}
\psdot*(1.75, 4)

\pscurve(1, 4) (0, 3)(2,1)(3,3)(2,3)(3,4)
\pscurve(1.75, 4) (0.25, 3)(2,1.75)(2,2.5)(1.75,3)(2.75,4)
\uput[90](2,1){$\breve{\Omega}'$}
%\parabola(-1, 4)(1,2)
\uput[0](0.25, 3){\hspace{4mm}$  \breve{\Omega}$}
%\parametricplot{0.0}{1.0}{t,(1-t)^3*(-1.42)+3*(1-t)^2*t*(-1.2)+3*(1-t)*t^2*0.48+t^3*0.7|(1-t)^3*4+3*(1-t)^2*t*1.68+3*(1-t)*t^2*4+t^3*1.54}
%\parametricplot{0.0}{1.0}{(1-t)^3*3.38+3*(1-t)^2*t*3.34+3*(1-t)*t^2*1.66+t^3*1.68|(1-t)^3*4+3*(1-t)^2*t*1.54+3*(1-t)*t^2*4+t^3*1.64}
%\parametricplot{4.216837306898861}{5.4047234150577195}{1*0.84*cos(t)+0*0.84*sin(t)+1.1|0*0.84*cos(t)+1*0.84*sin(t)+2.28}
%\parametricplot{3.141592653589793}{6.283185307179586}{1*2.89*cos(t)+0*2.89*sin(t)+1.11|0*2.89*cos(t)+1*2.89*sin(t)+4}
%\psline{->}(5.46,1.98)(3.21,2.01)
%\uput (5.7,2.05) {$\partial\breve{\Omega}'$}
%\psline{->}(5.38,3)(3.15,2.99)
%\uput(5.5,3.1){$\partial\breve{\Omega}'$}
\end{pspicture*}
		\caption{Space time schematic diagram for $\breve{\Omega}$ and $ \breve{\Omega}'$}
		\label{figure_yLowerBound}
\end{figure}
\begin{proof}
For this proof we work with $\breve\Omega$ and  a set containing it such that $H < c_2$. In particular, we can choose $\delta$ such that $\frac{c_2}{2} < c_2 - \delta$, and work with $ \breve\Omega \subset  \breve\Omega'$ . As $\frac{dy}{dt} = -(H-h)py > 0 $ in $\breve\Omega\, ,$ the height increases in this region. Therefore
\[ \inf_{ \breve\Omega } y = \inf_{ \Gamma_{ \breve\Omega}} y \, . \]
\noindent
 We claim that $\inf _{\Gamma_{ \breve\Omega } } y \neq 0 \, .$ To prove this suppose $\inf_{\Gamma_{ \breve\Omega } } y= 0 \, ,$ at a point $\mathbf{x}(l_0, t_0) \in \overline{\breve\Omega} \, $ (see figure \ref{figure_yLowerBound}), where $\overline{\breve\Omega}$ is the closure of $\breve\Omega$ and $t_0$ may equal to $T$.  If the height is zero at the point $\mathbf{x}(l_0, t_0) \in \overline{\breve\Omega} $, then the height has to decrease near the point just before $t_0$. That means there exists a neighbourhood $N$ of $\mathbf{x}(l_0, t_0)\, ,$  such that $N$ is ``past" in time, $ t < t_0$, and $N \subset \breve{\Omega}'\, ,$  and  $\frac{dy}{dt}{\big\vert}_{N} < 0 \, .$  But this is not possible, since $\frac{dy}{dt}{\big\vert}_{\breve{\Omega}'}  > 0 \, .$  %where $\breve{\Omega}'^{int}$ denotes the interior of $\breve{\Omega}'$. 
 Therefore there exists a constant $c$ such that, on the parabolic boundary of $\breve\Omega \, ,\inf y \geq c > 0 \, .$\\
\noindent
When we consider $M_t$ as a periodic hypersurface, we have $\partial \breve\Omega_t = \partial \hat\Omega_t \, ,$ that is $\Gamma_{ \breve\Omega} \backslash \breve\Omega_0 =\Gamma_{ \hat\Omega} \backslash \hat\Omega_0  $. As $\inf_{M_0} y \neq 0$ we have the desired result.
\end{proof}

\begin{remark} \label{Rem_Blowup} Similarly, as in $\breve\Omega$, a lower height bound can be obtained in $\breve\Omega'$ for any $\delta >0$.
\end{remark}

\subsection{A Gradient estimate}

The following Lemma gives us scaling control over the gradient compared to the radius when approaching a singularity on the axis of rotation. 

\begin{lemma} \label{Lemma_UV} There exists a constant $c_4$ depending only on the initial hypersurface, such that $ vy < c_4$, independent of time. 
\end{lemma}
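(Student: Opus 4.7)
The plan is to apply the non-cylindrical parabolic maximum principle of Proposition~\ref{Prop_Max_Principles} to the product $\phi:=vy$. Using Lemma~\ref{Lemma_evolutionEqs}(ii) and (viii), a direct computation (in which the two occurrences of $(n-1)v/y$ cancel, and the source reduces to $h$ because $pvy=1$) gives
\[
\Bigl(\tfrac{d}{dt}-\Delta\Bigr)(vy)\;=\;-|A|^2\,vy\;+\;h\;-\;2\langle\nabla v,\nabla y\rangle\;-\;\tfrac{2y\,|\nabla v|^2}{v}.
\]

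First I would record the parabolic boundary data. On $M_0$, compactness and smoothness yield a finite constant $C_0:=\sup_{M_0}(vy)$. On the lateral boundary $\partial M_t$, the Neumann condition $\rho'(a,t)=\rho'(b,t)=0$ forces $v=1$, and hence $vy=y\le R$, where $R$ is the upper height bound recalled at the start of Section~\ref{Section_Height_Gradient_And_Curvature_Estimates}.

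Next I would analyse an interior spatial maximum of $\phi$. The condition $\nabla\phi=0$ reads $y\nabla v=-v\nabla y$, so $\nabla v=-(v/y)\nabla y$ and $|\nabla v|^{2}=(v/y)^{2}|\nabla y|^{2}$. Substituting into the gradient terms,
\[
-2\langle\nabla v,\nabla y\rangle-\tfrac{2y|\nabla v|^{2}}{v}\;=\;\tfrac{2v}{y}|\nabla y|^{2}-\tfrac{2v}{y}|\nabla y|^{2}\;=\;0,
\]
leaving the identity $(\tfrac{d}{dt}-\Delta)(vy)\big|_{\nabla\phi=0}=-|A|^{2}vy+h$. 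In the axially symmetric setting, the critical equation $\nabla\phi=0$ with $\rho'\neq 0$ amounts to $\rho\rho''+v^{2}=0$, which forces the principal curvatures to coincide, $|k|=p=1/(vy)$; therefore $|A|^{2}=k^{2}+(n-1)p^{2}=n\,p^{2}=n/\phi^{2}$ at the interior critical configuration. The other possibility $\rho'=0$ gives $v=1$ and $\phi=y\le R$ directly.

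Finally I would close the estimate by invoking Proposition~\ref{Prop_Max_Principles} together with the bounds $c_{2}\le h\le c_{3}$ of Proposition~\ref{Maria's Prop 1.4}. Restricting to the non-cylindrical domain on which $\phi>\max(C_{0},R)$, the supremum of $\phi$ cannot be attained on the parabolic boundary, so the maximum principle drives the control through the interior identity above; combined with the curvature identity $|A|^{2}=n/\phi^{2}$ and the uniform $h$-bound, this yields a constant $c_{4}=c_{4}(n,R,C_{0},c_{2},c_{3})$, depending only on the initial hypersurface, such that $vy<c_{4}$ uniformly in $t<T$.

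The main obstacle is the delicate cancellation of the two gradient terms at the interior maximum and verifying that, combined with the axisymmetric identity $|A|^{2}=n/\phi^{2}$ at the critical configuration and the uniform upper bound on $h$, the resulting inequality is strong enough to prevent the formation of a new, larger interior maximum; this is exactly where the sharper second half of Proposition~\ref{Prop_Max_Principles} (ruling out the horizontal parts $Z_{V}$ from attaining a positive supremum) enters.
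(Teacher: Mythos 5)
Your computation of the evolution equation for $\phi=vy$ is correct, and so is the observation that the two gradient terms cancel at a spatial critical point: this cancellation is exactly the reflection of the fact that, as the paper writes it,
\[
-2\langle\nabla v,\nabla y\rangle-\tfrac{2y}{v}|\nabla v|^{2}=-\tfrac{2}{v}\langle\nabla v,\nabla(vy)\rangle\,,
\]
a genuine transport term in $\nabla\phi$. However, the final closure of your argument does not work as written, and it is precisely the step the paper handles by modifying the test function.

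Proposition~\ref{Prop_Max_Principles} requires that the quantity in question satisfy
$\bigl(\tfrac{d}{dt}-\Delta\bigr)f\le\langle a,\nabla f\rangle$, i.e.\ after absorbing the transport term there must be \emph{no positive zeroth-order source}. For $\phi=vy$ the remaining reaction/source is $-|A|^{2}\phi+h$, which is \emph{not} non-positive: whenever $|A|^{2}<h/\phi$ it is strictly positive, and restricting to the region $\{\phi>\max(C_{0},R)\}$ does nothing to prevent that. Your critical-point identity $|A|^{2}=np^{2}=n/\phi^{2}$ (which is computed correctly, and equivalent to $k=p$ at an interior maximum with $\rho'\neq0$) makes matters worse rather than better: it gives $-|A|^{2}\phi+h=-n/\phi+h$, which is \emph{more} positive precisely when $\phi$ is large. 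So the claim that ``the supremum of $\phi$ cannot be attained on the parabolic boundary'' is unjustified, and the maximum principle does not deliver $\sup_{V}\phi\le\sup_{\Gamma_{V}}\phi$ from what you have established. Relatedly, the constant you announce, $c_{4}(n,R,C_{0},c_{2},c_{3})$, cannot be right in the form stated: any argument along these lines produces a bound that grows linearly in $T$, and indeed $c_{2}$ plays no role at all.

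The paper's fix is to apply the maximum principle not to $vy$ but to $vy-c_{3}t$. Since $\nabla(c_{3}t)=0$ and $\Delta(c_{3}t)=0$, the transport term is unchanged, while the source becomes $-|A|^{2}vy+h-c_{3}\le 0$ by Proposition~\ref{Maria's Prop 1.4}. Now the hypothesis of the (cylindrical, since the surface can be treated as periodic and the Neumann condition dispensed with) parabolic maximum principle is satisfied, yielding $vy-c_{3}t\le\max_{M_{0}}vy$ and hence $vy\le\max_{M_{0}}vy+c_{3}T=:c_{4}$. Your approach is salvageable if you replace the final paragraph with this $-c_{3}t$ shift, or equivalently observe that at a spatial maximum $\tfrac{d}{dt}\phi_{\max}\le -|A|^{2}\phi_{\max}+h\le c_{3}$ and integrate; but as written, the closure is missing and the curvature identity is a red herring.
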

\begin{proof}
We calculate from Lemma \ref{Lemma_evolutionEqs}
\[ \frac{d}{dt} (yv -c_3 t) = \Delta(yv) -\frac{2}{v}\left\langle \nabla v \, , \nabla (yv) \right\rangle - yv|A|^2 + h-c_3 \, .\]
As $h \leq c_3$ we get by the parabolic maximum principle 
\begin{align*}
yv-c_3t &\leq \max_{M_0} yv \, , \\
yv &\leq \max_{M_0} yv +c_3T =: c_4 \, .
\end{align*}
\end{proof}

\subsection{Curvature estimates}

The next two propositions allow us to control the ratio of the principal curvatures on all of $M_t$ independent of time.

\begin{proposition} \label{Prop_KonP}There is a constant $c_1$ depending only on the initial hypersurface, such that  $\frac{k}{p} < c_1$, independent of time.
\end{proposition}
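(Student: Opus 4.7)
The plan is to study the scalar $w := k/p$. Since $p = (yv)^{-1} > 0$ everywhere (and in fact $p \geq 1/c_4$ by Lemma \ref{Lemma_UV}), $w$ is a well-defined smooth function on $M_t$, and I intend to bound it above by a constant depending only on $M_0$ by means of the parabolic maximum principle.

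First I would derive the evolution equation for $w$ from parts (iv) and (v) of Lemma \ref{Lemma_evolutionEqs}. The quotient rule gives
$$
\left(\frac{d}{dt} - \Delta\right) w \;=\; \frac{1}{p}\left(\frac{d}{dt} - \Delta\right) k \;-\; \frac{k}{p^{2}}\left(\frac{d}{dt} - \Delta\right) p \;+\; \frac{2}{p}\langle \nabla w, \nabla p\rangle,
$$
into which I substitute the formulas from Lemma \ref{Lemma_evolutionEqs}. The $|A|^{2}$ contributions cancel, and the remaining reaction terms should collect into
$$
\left(\frac{d}{dt} - \Delta\right) w \;=\; -\,(w-1)\bigl[\,2q^{2}(n-1+w) + hpw\,\bigr] \;+\; \frac{2}{p}\langle \nabla w, \nabla p\rangle.
$$

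The key observation is then that on the region $\{w \geq 1\}$ the bracket is strictly positive: $q^{2} \geq 0$, $n-1+w \geq n > 0$, and crucially $hpw > 0$ since $h \geq c_2 > 0$ by Proposition \ref{Maria's Prop 1.4} and $p,\,w > 0$. Hence the reaction term is non-positive on $\{w \geq 1\}$, and $w$ satisfies a differential inequality of the form $(\partial_{t} - \Delta)w \leq \langle a, \nabla w\rangle$ with $a = 2\nabla p/p$ continuous (since $p$ is smooth and strictly positive). To apply Proposition \ref{Prop_Max_Principles} Part A without struggling with the Neumann boundary of $M^{n}$, I would pass to the even reflection of the axially symmetric surface across the planes $x_{1}=a$ and $x_{1}=b$: the conditions $\rho'(a,t) = \rho'(b,t) = 0$ make $p$ and $k$, and hence $w$, extend smoothly to a closed periodic hypersurface, on which the maximum principle applies and yields
$$
\sup_{M_{t}} w \;\leq\; \max\bigl\{1,\ \sup_{M_{0}} w\bigr\} \;=:\; c_{1}.
$$

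The main obstacle I anticipate is the algebraic bookkeeping in the derivation of the evolution equation for $w$: verifying that the $|A|^{2}$ contributions cancel and that the reaction terms consolidate into the clean expression $-(w-1)[\,2q^{2}(n-1+w)+hpw\,]$ with the required sign. Once this identity is in hand, the sign analysis on $\{w \geq 1\}$ and the maximum principle application (via the periodic extension) are essentially routine.
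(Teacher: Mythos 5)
Your proposal is correct and takes essentially the same approach as the paper: derive the evolution equation for $w = k/p$, observe that the reaction terms are nonpositive on $\{w \geq 1\}$ (using $h \geq c_2 > 0$ from Proposition \ref{Maria's Prop 1.4} and $p>0$), and close with the parabolic maximum principle applied to the periodic extension. Your factored form $-(w-1)\bigl[2q^{2}(n-1+w)+hpw\bigr]$ is algebraically identical to the paper's $2\frac{q^{2}}{p^{2}}(p-k)\bigl((n-1)p+k\bigr)+\frac{hk}{p}(p-k)$.
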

\begin{proof}
Similar to equation $(19)$ of \cite{GH2} we calculate from Lemma \ref{Lemma_evolutionEqs}
\[ \frac{d}{dt} \left( \frac{k}{p} \right) = \Delta \frac{k}{p} +
	\frac{2}{p} \left\langle \nabla p \, , \nabla \left(\frac{k}{p}\right) \right\rangle +
	2\frac{q^2}{p^2}\left(p - k\right)\left((n-1)p + k\right) +
	\frac{hk}{p}\left(p-k\right) \, .\]
If $\frac{k}{p} \geq 1$ then $\left(p-k\right)<0$. By the parabolic maximum principle we obtain
\begin{equation}
\frac{k}{p} \leq \max \left(1, \max_{M_0} \frac{k}{p} \right) =: c_1 \, .
\end{equation}
\end{proof}

\begin{proposition} \label{Prop_KonP2} At points  $\mathbf{x}(l,t)$ of $M_t$ where $H\geq 0$ we have $\frac{|k|}{p} \leq \max(c_1, n-1)$. 
\end{proposition}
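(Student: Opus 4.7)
The plan is to split into the two cases determined by the sign of $k$ and invoke Proposition \ref{Prop_KonP} for the easy case, then use the hypothesis $H\ge 0$ together with the identity $H=k+(n-1)p$ for the other.

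First, if $k\ge 0$ at $\mathbf{x}(l,t)$, then $|k|=k$ and Proposition \ref{Prop_KonP} directly gives
\[
\frac{|k|}{p}=\frac{k}{p}\le c_1\le \max(c_1,n-1).
\]
So this case is immediate and requires no additional work.

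The remaining case is $k<0$. Here $|k|=-k$. The key observation is simply that $H=k+(n-1)p$ (recall that the second fundamental form has $n-1$ eigenvalues equal to $p$ and one equal to $k$, as noted in Section \ref{Section_Notation}). The hypothesis $H(l,t)\ge 0$ therefore rearranges to $(n-1)p\ge -k=|k|$, and since $p>0$ on an axially symmetric hypersurface (the radius is positive),
\[
\frac{|k|}{p}\le n-1 \le \max(c_1,n-1).
\]

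Combining the two cases yields the stated bound. There is really no obstacle here: the proposition is essentially an algebraic consequence of Proposition \ref{Prop_KonP} and the fact that, for $H\ge 0$, the negative principal curvature $k$ cannot be more negative in magnitude than $(n-1)p$. The only thing to verify carefully is that the expression $H=k+(n-1)p$ is indeed the correct mean curvature in this axially symmetric parametrization, which is exactly what is recorded right after \eqref{notEq:1.2} in the notation section.
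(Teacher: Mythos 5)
Your proof is correct and follows essentially the same approach as the paper: split on the sign of $k$, invoke Proposition \ref{Prop_KonP} when $k\ge 0$, and use $H=k+(n-1)p\ge 0$ to bound $|k|/p\le n-1$ when $k<0$.
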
 
\begin{proof}
In a region or at any given point where $H$ is positive, if $k$ is positive as well  we have by Proposition \ref{Prop_KonP} $\frac{|k|}{p} = \frac{k}{p} \leq c_1$. If $k$ is negative, then  % (in $\Omega_{t}^{k+}$)
\begin{align}\label{evEq:1.8}
 k + (n-1)p \geq 0 \, ,\notag \\
 -|k| + (n-1)p \geq 0 \, , \notag \\
 \frac{|k|}{p} \leq (n-1)\, .
\end{align} 
\end{proof}

Now we proceed to show that singularities cannot develop away from the axis of rotation, and that $|A|$ is bounded in regions where $ y \geq \epsilon >0 $.  

\begin{proposition}\label{Prop_AsquaredBddAwayFromAxis} For given $\epsilon > 0 $, let $S_t \subset M_t$ and $S = \bigcup_{t < T} S_t\, $ be a region such that $y \vert_S \geq \epsilon > 0\, $  and $H\vert_{\partial S_t} \geq 0 $ for all $t < T \, . $ Then the norm of the second fundamental form $|A|$ is bounded in $S$. 

\end{proposition}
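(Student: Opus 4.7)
The plan is to reduce the problem to bounding the axial principal curvature $k$, and then to propagate a bound on $|A|^2$ from $\Gamma_S$ into the interior of $S$ via the maximum principle in non-cylindrical domains (Proposition \ref{Prop_Max_Principles}) applied to a carefully chosen auxiliary function built out of $|A|^2$ and the gradient function $v$.

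First I would gather the pointwise information available in $S$. Since $y \geq \epsilon$ and $p = y^{-1}v^{-1}$, we have $p \leq 1/\epsilon$; Lemma \ref{Lemma_UV} gives $v \leq c_4/\epsilon$, and then $p^2 + q^2 = y^{-2}$ forces $|q|$ to be bounded as well. Because $|A|^2 = k^2 + (n-1)p^2$, bounding $|A|$ in $S$ is equivalent to bounding $k^2$. On the parabolic boundary $\Gamma_S$, the initial data give a bound at $t=0$, while on $\partial S_t$ for $t>0$ the hypothesis $H \geq 0$ combined with Proposition \ref{Prop_KonP2} gives $|k|/p \leq \max(c_1, n-1)$, and hence $|A|^2 \leq K_0$ on $\Gamma_S$ for a constant $K_0$ depending only on $\epsilon$ and $M_0$.

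The main step is to propagate this boundary bound inward. I would apply Proposition \ref{Prop_Max_Principles} to an auxiliary function of the form $f = |A|^2 \phi(v)$, with $\phi$ chosen so that the destabilising $+2|A|^4$ term in Lemma \ref{Lemma_evolutionEqs}(vii) is cancelled against the $-|A|^2 v$ term in the evolution of $v$ from Corollary \ref{Cor_Ev_Eq_For_Grad_V}. A product-rule computation shows that the $|A|^4$ coefficient of $(\partial_t - \Delta) f$ equals $(2\phi - v\phi')$, which is strictly negative whenever $\phi'(v)/\phi(v) > 2/v$; a convenient choice is $\phi(v) = v^2/(C^* - v^2)$ with $C^* > (c_4/\epsilon)^2$, which keeps $\phi$ bounded above and below in $S$ and produces $(2\phi - v\phi') = -2v^2\phi/(C^* - v^2) < 0$. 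At any putative interior maximum of $f$ the identity $\nabla f = 0$ lets one rewrite the cross term $-2\phi'\langle \nabla|A|^2, \nabla v\rangle$ as $2|A|^2(\phi')^2/\phi \cdot |\nabla v|^2$, which together with $-|A|^2 \phi''|\nabla v|^2$ must be absorbed by $-2\phi|\nabla A|^2$ via Kato's inequality $|\nabla|A|^2|^2 \leq 4|A|^2 |\nabla A|^2$. The remaining contributions, namely $-2h\phi \mathscr{C}$ and $(n-1)p^2 v^3 \phi'|A|^2$, are of lower order in $|A|$ and are controlled since $h$, $p$, $v$ are already bounded on $S$.

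After rearrangement, the differential inequality takes the shape $(\partial_t - \Delta)f \leq \langle a, \nabla f\rangle + C_1 f + C_2$; applying Proposition \ref{Prop_Max_Principles} to $e^{-C_1 t} f - C_2 t$ forces its supremum on $S$ to be attained on $\Gamma_S$, and since $\phi(v)$ has positive upper and lower bounds on $S$, this translates into the desired uniform bound on $|A|^2$. The main obstacle is the algebraic balancing in the choice of $\phi$: growth slower than $v^2$ cannot kill the $|A|^4$ term, while growth that is too rapid makes the $\phi''|\nabla v|^2$ correction dominate the absorbing gradient term. Navigating this balance, and using the bound $|\nabla v|^2 = v^2 k^2(v^2 - 1) \leq C|A|^2$ available in $S$ to keep the cross-term contributions under control, is the technical crux of the argument.
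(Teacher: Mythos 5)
Your proposal is essentially the paper's proof: the auxiliary function $|A|^2\phi(v)$ with $\phi(v) = v^2/(C^*-v^2)$ is precisely the paper's $|A|^2\varphi(v^2)$ with $\varphi(r) = r/(\lambda - \mu r)$ (taking $\mu = 1$, $\lambda = C^*$), the boundary bound from $H\vert_{\partial S_t} \geq 0$ via Proposition \ref{Prop_KonP2} is the same, and the propagation by the parabolic maximum principle matches the paper's argument step for step. The only point worth tightening is the description of $-2h\phi\mathscr{C}$ as ``lower order'': since $\mathscr{C}$ is cubic in the second fundamental form, this term is comparable to $|A|^3$ and must be absorbed into the negative quartic term $-2\phi^2|A|^4$ by Young's inequality (which is exactly what the paper does), the bounds on $h$ and on $\phi$ from above and below merely making the Young constant uniform over $S$.
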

\begin{proof}
We proceed as in (\cite{EH91}, proof of Theorem $3.1$), (\cite{MA1}, Proposition $5$) and (\cite{AthKan1}, Proposition 6.2) and calculate the evolution equation for the product $g = |A|^2 \varphi(v^2)$, where $\varphi(r) = \frac{r}{\lambda - \mu r}$, with some constants $\lambda, \mu >0 $ to be chosen later and $v = \langle \nu, \omega \rangle ^{-1}$. From the evolution equation of $g$ we find the inequality
\[\left ( \frac{d}{dt}- \Delta \right) g \leq -2\mu g^2 -2\lambda \varphi v^{-3} \left\langle \nabla v \, , \nabla g \right\rangle - \frac{2\lambda \mu}{(\lambda-\mu v^2)^2}|\nabla v|^2 g -2hC \varphi(v^2) 
+ \frac{2(n-1)}{y^2}v^2 \varphi'|A|^2  \, .\]
Similar to (\cite{AthKan1}, Proposition 6.2) we obtain

\[ g \leq \max \left( \max_{\Gamma _S} g, \, C \right)  \, ,  \] % \frac{c_{8}}{2c_9} + \sqrt{\frac{c_{11}}{c_9}}
where $\Gamma_S$ denotes the parabolic boundary of $S$, which may be non-cylindrical.  Therefore from the non-cylindrical maximum principle (Proposition \ref{Prop_Max_Principles}) 
\begin{equation}\label{eq_Ev_Eq_1.43}
 |A|^2\varphi(v^2)  \leq \max \left( \max_{S_0} |A|^2\varphi(v^2), \hspace{2mm}  \max_{\begin{subarray}{l} {\partial S_t}\\ t<T \end{subarray}} |A|^2\varphi(v^2), \, C  \right) \, . % \hspace{2mm} \frac{c_{8}}{2 c_9} + \sqrt{\frac{c_{11}}{c_9}}
\end{equation}
\noindent
As $H\vert_{\partial S_t} \geq 0 $ for all $t<T\, ,$ we have $\frac{|k|}{p} {\big \vert}_{\partial S_t} < \max(c_1, n-1) =: c^1 $ for all $t<T\, ,$  by Proposition \ref{Prop_KonP2}.  Therefore
\begin{align*}
 |A|^2\vert_{\partial S_t} &= (k^2 + (n-1) p^2 )\vert_{\partial S_t} \leq (n-1 + c^1)p^2 \vert_{\partial S_t} \, , \\
 & \leq (n-1 + c^1) y^{-2}\vert_{\partial S_t} \leq (n-1 + c^1) \epsilon^{-2} \, , 
 \end{align*}
\noindent
for all $t<T$. Note that $\varphi(v^2)>0$ and is bounded from above as long as $v$ is bounded, which holds for any points that are at a distance larger than $\epsilon$ from the axis of rotation (Lemma \ref{Lemma_UV}). Therefore $\max_{\begin{subarray}{l} {\partial S_t}\\ t<T \end{subarray}} |A|^2\varphi(v^2) $ is bounded. Thus $ |A|^2\varphi(v^2) $ is bounded in $S$.  As we chose $\lambda$ to be greater than $ \mu \max v^2 $ and as $v \geq 1 $, we have $\left(\varphi(v^2)\right)^{-1}$ bounded as well and this completes the proof.
\end{proof}

Proposition \ref{Prop_AsquaredBddAwayFromAxis} gives in particular a bound on $|A|$ in $\breve\Omega$, which includes all regions of negative $H$; we summarize this result here:

\begin{corollary}\label{Lemma_HLowerBound}
For $\mathbf{x}$ satisfying \eqref{eq:int_1}, the norm of the second fundamental form $|A|$ is bounded in the region $\breve\Omega$.  
\end{corollary}
\begin{proof}
\noindent
From Lemma \ref{Lemma_yLowerBound} we know that in $\breve\Omega \, , \inf y \geq c > 0 \, .$ On the boundary of $ \breve\Omega_t\, , H = \frac{c_2}{2} >0 \, $ for all $t<T$. Therefore by Proposition \ref{Prop_AsquaredBddAwayFromAxis} there exists a constant $C'$ such that $|A|^2 \vert_{\breve\Omega} \leq C' < \infty \, . $ \\
\end{proof}

\begin{proposition}\label{Lemma_HLowerBound_2}
There exists a constant $C$ independent of time such that  $H(l, t) \geq -C^2$ for all $\mathbf{x}(l, t) \in \ M_t$. 
\end{proposition}
\begin{proof}
\noindent
By definition $\breve\Omega = \bigcup_{t < T}  \left\{\mathbf{x}(l,t) \in M_t : H(l,t) \leq \frac{c_2}{2}\right\}$. $H$ can only be negative for $\mathbf{x}(l, t) \in \breve\Omega$. But by the above result $|A|^2 \vert_{\breve\Omega} \leq C' < \infty \, . $ As $\frac{1}{n}H^2 \leq |A|^2$ we deduce
$$ H \vert_{M_t} \geq - \sqrt{nC'}=: -C^2 $$

%From Lemma \ref{Lemma_yLowerBound} we know that in $\breve\Omega \, , \inf y \geq c > 0 \, .$ On the boundary of $ \breve\Omega_t\, , H = \frac{c_2}{2} >0 \, $ for all $t<T$. Therefore by Proposition \ref{Prop_AsquaredBddAwayFromAxis} there exists a constant $C'$ such that $|A|^2 \vert_{\breve\Omega} \leq C' < \infty \, . $ \\
%\noindent
%Suppose $H \rightarrow -\infty\, $ on $M_t$. This can only happen in $\breve\Omega = \bigcup_{t < T} \breve\Omega_t = \bigcup_{t < T} \{\mathbf{x}(l,t) : H(l,t) \leq \frac{c_2}{2}\}\, .$ As $|A|$ is bounded in $\breve\Omega $ and as $\frac{1}{n}H^2 \leq |A|^2$ we get bounds for $|H|$ as well. As the mean curvature can be negative only in $\breve{\Omega}_t$  we have $H \vert_{M_t} \geq - \sqrt{nC'}=: -C^2 $.
\end{proof}

We can now refine Proposition \ref{Prop_AsquaredBddAwayFromAxis} to show that no singularities develop away from the axis of rotation.

\begin{proposition}\label{Prop_No_Singularities_Away_From_The_Axis}
For given $\epsilon >0 $, let $S_t \subset M_t$ and $S = \bigcup_{t < T} S_t\, ,$ such that $y \vert_S \geq \epsilon > 0\, ,$ for all $t < T \, . $ Then the norm of the second fundamental form $|A|$ is bounded in the region $S$.
\end{proposition}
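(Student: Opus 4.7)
The plan is to exploit the fact that Proposition \ref{Prop_AsquaredBddAwayFromAxis} is already at hand, and that its sole additional hypothesis beyond what we are given is $H\big|_{\partial S_t}\geq 0$. Since we cannot guarantee this on the full $S$, I will split $S$ along the threshold $H=\tfrac{c_2}{2}$: write $S=\bigl(S\cap\hat\Omega'\bigr)\cup\bigl(S\cap\breve\Omega'\bigr)$, establish $|A|$ bounded on each piece by a separate argument, and combine. The region where $H$ may be negative is entirely contained in $\breve\Omega'$, and there we already have a direct curvature estimate.

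For $\tilde S := S\cap\hat\Omega'$, set $\tilde S_t = S_t\cap\hat\Omega'_t$ and note that $\tilde S\subset S$ gives $y\big|_{\tilde S}\geq\epsilon$. The key point is to verify the hypothesis $H\big|_{\partial\tilde S_t}\geq 0$ needed to apply Proposition \ref{Prop_AsquaredBddAwayFromAxis}. Since $\hat\Omega'_t=\{H>\tfrac{c_2}{2}\}$ is an open subset of $M_t$, any point $x\in\partial_{M_t}\tilde S_t$ must satisfy one of the following: either $x\in\partial\hat\Omega'_t$, in which case $H(x)=\tfrac{c_2}{2}$ by continuity of $H$; or $x$ lies in $\hat\Omega'_t$ itself, forcing $x\in\partial_{M_t}S_t$ and $H(x)>\tfrac{c_2}{2}$. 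In either case $H(x)\geq\tfrac{c_2}{2}>0$, so Proposition \ref{Prop_AsquaredBddAwayFromAxis} applies to $\tilde S$ and yields a uniform bound $|A|^2\leq C_1$ on $\tilde S$.

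On the complementary piece $S\cap\breve\Omega'$, Lemma \ref{Lemma_HLowerBound} already gives $|A|^2\leq C'$ on the whole of $\breve\Omega'$, and in particular on $S\cap\breve\Omega'$. Combining the two bounds, $|A|^2\leq\max(C_1,C')$ throughout $S$, which is the claim. The only nontrivial ingredient is the boundary verification in the previous paragraph, and I do not expect this to be a real obstacle: it reduces to the observation that the level set $\{H=\tfrac{c_2}{2}\}$ sits strictly above zero together with the fact that the earlier curvature estimate inside $\breve\Omega'$ was established without any lower height bound. No new evolution-equation computation is required.
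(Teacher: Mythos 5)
Your proof is correct. The argument verifying $H\big|_{\partial\tilde S_t}\geq 0$ is sound: if a boundary point of $\tilde S_t=S_t\cap\hat\Omega'_t$ lies in the open set $\hat\Omega'_t$ then it must lie on $\partial S_t$ with $H>\tfrac{c_2}{2}$, and otherwise it is a limit of points with $H>\tfrac{c_2}{2}$ lying outside $\hat\Omega'_t$, forcing $H=\tfrac{c_2}{2}$ by continuity; either way $H\geq \tfrac{c_2}{2}>0$. Together with $y\big|_{\tilde S}\geq\epsilon$ inherited from $S$, Proposition \ref{Prop_AsquaredBddAwayFromAxis} applies, and Lemma \ref{Lemma_HLowerBound} handles $S\cap\breve\Omega'$.

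The paper's proof of Proposition \ref{Prop_No_Singularities_Away_From_The_Axis} uses the same two ingredients but packages them differently: instead of decomposing the domain, it re-runs the maximum-principle estimate from Proposition \ref{Prop_AsquaredBddAwayFromAxis} on all of $S$ up to \eqref{eq_blow_up_Ev_Eq_1.43_2}, then splits the boundary term $\max_{\partial S_t}$ into $\partial S_t^+=\{H\geq 0\}$ and $\partial S_t^-=\{H<0\}$, handling $\partial S_t^+$ as before and observing that $\partial S_t^-\subset\breve\Omega'_t$ so Lemma \ref{Lemma_HLowerBound} bounds $|A|^2$ there. In short, the paper decomposes the \emph{boundary of the estimate} while you decompose the \emph{domain}. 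Your version has the advantage of using Proposition \ref{Prop_AsquaredBddAwayFromAxis} as a black box rather than reopening its proof, at the small cost of the point-set verification on $\partial\tilde S_t$; the paper's version avoids that verification but must re-examine the evolution-equation machinery. Both are valid and yield the same bound.
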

\begin{proof}
The proof is exactly the same as in Proposition \ref{Prop_AsquaredBddAwayFromAxis} up to \eqref{eq_Ev_Eq_1.43}. 
%------------------------------------------------------------------
% BEGIN COMMENT
\begin{comment}
We compute the evolution equation for $|A|^2\varphi(v^2)$ and end up with 
\begin{equation}\label{eq_blow_up_Ev_Eq_1.43_2}
 |A|^2\varphi(v^2)  \leq \max \left( \max_{S_0} |A|^2\varphi(v^2), \hspace{2mm}  \max_{\begin{subarray}{l} {\partial S_t}\\ t<T 
\end{subarray}} |A|^2\varphi(v^2), \hspace{2mm} \frac{c_{8}}{2 c_9} + \sqrt{\frac{c_{11}}{c_9}} \right) \, . 
\end{equation} 
\end{comment}
% END COMMENt
%------------------------------------------------------------------

\noindent
Let 
%which is \eqref{eq_Ev_Eq_1.43}. 
\begin{align*}
 \partial S_t^+ &= \left\{ \mathbf{x}(l,t) \in \partial S_t \colon H(l,t) \geq 0  \right\} \, , \hspace{3mm} \text{and} \\
\ \partial S_t^- &= \left\{ \mathbf{x}(l,t) \in \partial S_t \colon H(l,t) < 0  \right\} \, , 
\end{align*}
\noindent
so that $\partial S_t = \partial S_t^+ \cup \partial S_t^-$. Continuing from \eqref{eq_Ev_Eq_1.43}
$$  |A|^2\varphi(v^2)  \leq \max \left( \max_{S_0} |A|^2\varphi(v^2), \hspace{2mm}  \max_{\begin{subarray}{l} {\partial S_t^+}\\ t<T \end{subarray}} |A|^2\varphi(v^2), \hspace{2mm}  \max_{\begin{subarray}{l} {\partial S_t^-}\\ t<T \end{subarray}} |A|^2\varphi(v^2), \, C \right) \, .  % \hspace{2mm} \frac{c_{8}}{2 c_9} + \sqrt{\frac{c_{11}}{c_9}} 
$$ 

\noindent
Here we look at the term $\max_{\begin{subarray}{l} {\partial S_t^-}\\ t<T \end{subarray}} |A|^2\varphi(v^2)\, ,$ as the other terms are taken care of in Proposition \ref{Prop_AsquaredBddAwayFromAxis}. As $H <0 < \frac{c_2}{2}$ on $\partial S_t^-$,  $\partial S_t^- \subset \breve\Omega_t$. From Corollary \ref{Lemma_HLowerBound},  $\vert A \vert ^2$ is bounded in $\breve\Omega$. As $\partial S_t^- \subset \breve\Omega_t$,  $\vert A \vert ^2$ is bounded on $\partial S_t^-$ for all $t<T$. As $\varphi(v^2)>0$ and is bounded from above at points away from the axis, $\max_{\begin{subarray}{l} {\partial S_t^-}\\ t<T \end{subarray}} |A|^2\varphi(v^2)$ is bounded and as in \cite{AthKan1} we get the desired result. 
\end{proof}

We proceed now to show that the projection of $\hat{\Omega}$ onto the $x_1$-axis is not 'collapsing' to a point. This result is important for the rescaling argument in Section \ref{Section_Singularity}.

\begin{lemma}\label{Lemma_GradH_on the_x1_axis}For the mean curvature $H$ of the evolving hypersurface $M_t$ we have, if $|\nabla H| \leq c$ in a closed region $S \subset \Omega\times [0, T]$, then  ${\big\vert}\frac{d H}{d x_1}{\big\vert} \leq c $ in $S$ as well.
\end{lemma}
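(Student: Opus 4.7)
The claim is essentially a coordinate computation expressing the intrinsic gradient of $H$ on $M_t$ in terms of the $x_1$-derivative along the generating curve. Locally I would parametrize $M_t$ by $(x_1,\theta^1,\ldots,\theta^{n-1})$, where the $\theta^i$ are angular coordinates on the rotational $(n-1)$-sphere, so that the surface is the image of $(x_1,\theta)\mapsto(x_1,\rho(x_1,t)\sigma(\theta))$. A direct calculation of the induced first fundamental form then gives the block-diagonal structure
$$
g_{11}=1+\rho'^{\,2}=v^{2},\qquad g_{1j}=0\;\;(j\ge 2),\qquad g_{ij}=\rho^{2}\tilde g_{ij}\;\;(i,j\ge 2),
$$
where $\tilde g$ is the round metric on $S^{n-1}$. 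In particular $g^{11}=1/v^{2}$.

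By axial symmetry the mean curvature is a function of $x_1$ and $t$ only, so $\partial_{\theta^i}H\equiv 0$ for every angular coordinate. Consequently the intrinsic gradient on $M_t$ reduces to a single term,
$$
|\nabla H|^{2}=g^{ij}\,\partial_i H\,\partial_j H=g^{11}\!\left(\frac{dH}{dx_1}\right)^{\!2}=\frac{1}{v^{2}}\!\left(\frac{dH}{dx_1}\right)^{\!2},
$$
which rearranges to the pointwise identity
$$
\left|\frac{dH}{dx_1}\right|=v\,|\nabla H|.
$$
Combined with the hypothesis $|\nabla H|\le c$ on $S$, this immediately gives $\left|\tfrac{dH}{dx_1}\right|\le v\cdot c$ on $S$.

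The only real point to check is therefore that $v=\sqrt{1+\rho'^{\,2}}$ is bounded on $S$. This is where I would expect the mild subtlety to lie: a priori $v$ can blow up as $t\to T$. However, $S$ is closed in $\Omega\times[0,T]$ with $\Omega$ compact, hence compact, and in every setting in which this lemma is subsequently invoked the flow is smooth on $S$, so the continuous function $v$ attains a finite maximum $v_{\max}$ on $S$. Then
$$
\left|\frac{dH}{dx_1}\right|\le v_{\max}\,c,
$$
and after absorbing $v_{\max}$ into the constant (which the author tacitly does by reusing the letter $c$), the claim follows. No further curvature or evolution-equation input is required.
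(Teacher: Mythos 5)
Your proof follows the same route as the paper's — orthogonally decompose $|\nabla H|^2$ in the frame $\tau_1,\ldots,\tau_n$, kill the angular components by axial symmetry, and read off the $x_1$-direction — but you are more careful on one point, and it is an important one. You correctly get
$$
\nabla_{\tau_1}H=\frac{1}{v}\,\frac{\partial H}{\partial x_1},\qquad\text{so}\qquad \left|\frac{\partial H}{\partial x_1}\right|=v\,|\nabla H|,
$$
since $\tau_1$ is the \emph{unit} tangent and $\partial_{x_1}\mathbf{x}$ has length $v=\sqrt{1+\rho'^2}$. The paper's proof equates $\nabla_{\tau_1}H$ directly with $\partial H/\partial x_1$, i.e.\ silently drops the factor of $v\ge 1$; taken literally, the lemma's conclusion with the \emph{same} constant $c$ would then fail whenever $\rho'\neq 0$. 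Your version, $|\partial H/\partial x_1|\le v_{\max}\,c$, is the correct statement, and it is also the version the paper actually uses: in Lemma~\ref{Grad_H_Consequences} the bound is invoked with a generic constant $c'$, on the region $\breve\Omega''$ where the lower height bound $y\ge\epsilon$ together with Lemma~\ref{Lemma_UV} ($vy\le c_4$) gives $v\le c_4/\epsilon$ uniformly up to $t=T$.

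The only soft spot in your write-up is the justification that $v$ is bounded on $S$. Appealing to ``the flow is smooth on $S$'' is circular in spirit, since $S$ reaches the singular time $T$; the clean argument available from the paper's machinery is exactly the one above: the lemma is applied only on regions with a uniform lower height bound, and there $v\le c_4/\inf_S y$ by Lemma~\ref{Lemma_UV}. If you replace your smoothness appeal with that estimate, your proof is a strict improvement on the one in the paper.
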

\begin{proof}
\noindent
For the magnitude of $\nabla H$ we obtain
$$ |\nabla H |^2 = |\nabla_{\tau_1} H |^2 + \cdots + |\nabla_{\tau_n} H |^2 \, . $$
\noindent
As $M_t$ is an axially symmetric surface, the mean curvature $H$ is constant on the $n-1$ dimensional sphere for a fixed $x_1$ coordinate. Here we let $\mathbf{x}(l, t) =  \mathbf{x}(x_1, \theta_1, \cdots, \theta_{n-1} , t)$, and for $2 \leq i \leq n$ %and as $ \tau_2, \cdots , \tau_n \in \mathbb{S}^{n-1}\, ,$
$$ \nabla_{\tau_i} H =  \frac{\partial}{\partial \theta_{i-1}}H = 0 \, , $$ 
\noindent
so that
$$ |\nabla H |^2 = |\nabla_{\tau_1} H |^2  = {\bigg\vert}\frac{\partial}{\partial x_1}H {\bigg\vert}^2  \, . $$
\noindent
As $|\nabla H|$ is bounded in $S$, we have the same bound for ${\big\vert}\frac{\partial H}{\partial x_1}{\big\vert} $ as well.
\end{proof}

\begin{figure}[h]
   \centering
   \scalebox{0.75}{\pagestyle{empty}
\psset{xunit=1.0cm,yunit=1.0cm,dotstyle=o,dotsize=3pt 0,linewidth=0.8pt,arrowsize=3pt 2,arrowinset=0.25}
\begin{pspicture*}(-4.5,-0.5)(5,5.3)
\psline(-3,5)(-3,0)
\psline(4,0)(4,5)
\psline(-3,4)(4,4)
\psline(-3,0)(4,0)
\uput[180](-3,0){$t =0$}
\uput[180](-3,2){$t$}
\uput[180](-3,4){$t =T$}
\pscustom[fillstyle=hlines,linecolor=brown,hatchcolor=orange]{
\pscurve[linecolor=brown](-2.76, 0)(-2.46, 1)(-2, 2)(-2.2, 3)(-2, 4)
\psline(-2, 4)(-3,4)
\psline(-3,4)(-3,0)
\psline(-3,0)(-2.76, 0)}
\pscustom[fillstyle=hlines,linecolor=brown,hatchcolor=orange]{
\pscurve[linecolor=brown](-1, 0)(-1, 1)(-0.8, 1.7)(-0.74, 2.24)(-0.56, 3)(-0.5, 4)
\psline(-0.5, 4)(1.3, 4)
\pscurve[linecolor=brown](1.3, 4)(0.98, 3.42)(0.84, 2.9)(0.8, 2.24)(0.78, 1.54)(0.44, 1.02)(0.38, 0)
\psline(0.38, 0)(-1, 0)}

\pscustom[fillstyle=hlines,linecolor=brown,hatchcolor=orange]{
\pscurve[linecolor=brown](4, 2)(3.46, 2.46)(3, 3)(2.62, 4)
\psline(2.62, 4)(4,4)
\psline(4,4)(4, 2)}

\pscurve[linecolor=blue](-0.66, 0)(-0.62, 0.76)(-0.34, 1.6)(-0.18, 2.46)(-0.1, 3.38)(0, 4)
\pscurve[linecolor=red](-0.18, 0)(-0.08, 0.64)(0.14, 1.28)(0.1, 1.78)(0.16, 2.3)(0.26, 2.86)(0.38, 3.34)(0.34, 3.74)(0.28, 4) 
\psline{->}(1.48,1.78)(0.1,1.78)
\psline{->}(-1.22,0.76)(-0.62,0.76)
\uput[0](1.48,1.78){$H = C_1 $}
\uput[180](-1.22,0.76){$H = C_2 $}
 \uput[0](-2.85, 2.52){  $\breve{\Omega}'$}
\uput[0](3.2, 3.28){  $\breve{\Omega}'$}
\uput[90](0.66, 3.3){  $\breve{\Omega}'$}
\end{pspicture*}}
		\caption{The paths of $H = C_1 $ and $H = C_2$ in $ \breve{\Omega}'$}
\end{figure}
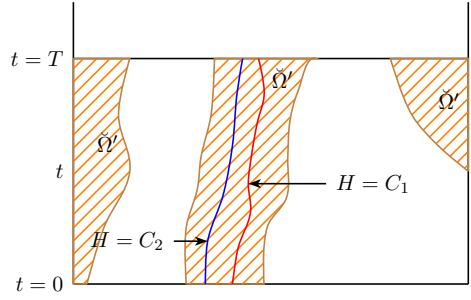

We recall that $\breve\Omega' = \bigcup_{t < T} \breve\Omega'_t = \bigcup_{t < T} \{\mathbf{x}(l,t) : H(l,t) \leq c_2 - \delta : \delta > 0 \}\,  .$  Let us define the following:

\begin{definition}\label{def_6.7}
In a connected component of $ \breve{\Omega}' \, ,$ consider any two paths where $H =C_1$ and $H = C_2 $ such that $0 \leq C_1 < C_2 \leq c_2 - \delta \, , \delta > 0 $. (Recall that $ 0 < c_2 \leq h(t) \leq c_3 $.) Let 
$$ l_1(t) = \{ l  \in M^n : H(l,t) = C_1 \} \, , \hspace{3mm} \text{and} \hspace{3mm}
x_1(l_1(t) , t) = \langle \mathbf{x}(l_1(t), t) , i_1 \rangle \, , $$ 
$$ l_2(t) = \{ l  \in M^n : H(l,t) = C_2 \} \, , \hspace{3mm} \text{and} \hspace{3mm}
x_1(l_2(t) , t) = \langle \mathbf{x}(l_2(t), t) , i_1 \rangle \, , $$ 
$$ \alpha(t) = \min \{x_1(l_1(t) , t),   x_1(l_2(t) , t)  \}   \, , \hspace{3mm} \text{and} \hspace{3mm}
 \beta(t) = \max\{x_1(l_1(t) , t),   x_1(l_2(t) , t)  \} \, . $$
 \noindent
 Here $l_i(t), i=1,2$ is the curve in $M^n\times[0,T)$ that parametrizes $H=C_i$ and $x_1(l_i(t) , t)$ the corresponding $x_1$ coordinate.
\end{definition}

\begin{lemma}\label{Grad_H_Consequences}
With the above notation, there exists a constant $c$ such that $|x_1(l_1(t) , t) -   x_1(l_2(t) , t) | \geq c > 0 $ for all $t \leq T$.
\end{lemma}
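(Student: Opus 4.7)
The plan is to establish a uniform bound $\left|\tfrac{\partial H}{\partial x_1}\right| \leq M$ on $\breve{\Omega}''$ and then apply the mean value theorem in the $x_1$-direction.

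\emph{Step 1: a bound on $|A|$ on $\breve{\Omega}''$.} By Remark~\ref{Rem_Blowup}, the height $y$ admits a positive lower bound on $\breve{\Omega}''$, and on the lateral parabolic boundary $\partial\breve{\Omega}''_t$ one has $H = c_2 - \delta > 0$. Proposition~\ref{Prop_AsquaredBddAwayFromAxis} applied with $S = \breve{\Omega}''$ then yields a constant $C_A$ with $|A|^2 \leq C_A$ on $\breve{\Omega}''$.

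\emph{Step 2: a bound on $|\nabla H|$ on $\breve{\Omega}''$.} To handle the boundary I would enlarge the region: set $\widetilde{S} := \bigcup_{t<T}\{\mathbf{x}(l,t) : H(l,t) \leq c_2 - \tfrac{\delta}{2}\} \supset \breve{\Omega}''$. The argument of Step~1 still applies on $\widetilde{S}$ and yields a uniform bound on $|A|$. By Lemma~\ref{Lemma_HLowerBound_2} and Proposition~\ref{Maria's Prop 1.4}, the coefficient $(H-h)|A|^2$ in evolution equation (vi) for $H$ is uniformly bounded on $\widetilde{S}$. Standard Shi-type interior parabolic gradient estimates, applied to this heat-type equation on the enlarged domain, then furnish a constant $M$ with $|\nabla H| \leq M$ on the smaller set $\breve{\Omega}''$. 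Lemma~\ref{Lemma_GradH_on the_x1_axis} converts this into $\left|\tfrac{\partial H}{\partial x_1}\right| \leq M$ on $\breve{\Omega}''$.

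\emph{Step 3: mean value theorem.} Fix $t \leq T$ and let $I_t \subset \bigR$ denote the $x_1$-interval joining $x_1(l_1(t),t)$ to $x_1(l_2(t),t)$ inside the given connected component. Since $H(\cdot,t)$ is continuous along $I_t$ and equals $C_1$ and $C_2$ at the respective endpoints, the mean value theorem yields some $x_\ast \in I_t$ with
\[
C_2 - C_1 = \left|\tfrac{\partial H}{\partial x_1}(x_\ast, t)\right| \cdot \bigl|x_1(l_1(t),t) - x_1(l_2(t),t)\bigr|,
\]
so that $|x_1(l_1(t),t) - x_1(l_2(t),t)| \geq (C_2 - C_1)/M =: c > 0$, uniformly in $t$.

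The main obstacle is Step~2, because the parabolic domain $\widetilde{S}$ is non-cylindrical. One way to overcome this is to localize the gradient estimate via a cutoff supported strictly inside $\widetilde{S}$ relative to $\breve{\Omega}''$; alternatively, one can apply Proposition~\ref{Prop_Max_Principles} directly to a quantity of the form $|\nabla H|^2\,\eta(H)$, where $\eta$ is smooth, positive on $(-\infty, c_2-\delta]$ and vanishing as $H \to c_2 - \tfrac{\delta}{2}$, which provides precisely the buffer needed to absorb the boundary terms of $\widetilde{S}$.
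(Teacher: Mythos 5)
Your overall scaffolding is the same as the paper's: bound $\left|\tfrac{\partial H}{\partial x_1}\right|$ on $\breve\Omega''$ via Lemma~\ref{Lemma_GradH_on the_x1_axis}, then integrate (or apply the mean value theorem) along the $x_1$-interval between the two level sets. Step~3 is correct and essentially identical to the paper's final computation. The gap is in Step~2.

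You propose to obtain $|\nabla H|\leq M$ from the evolution equation for $H$ alone, using Shi-type gradient estimates or a maximum principle applied to $|\nabla H|^2\,\eta(H)$. This does not close. The source term in $\tfrac{d}{dt}H = \Delta H + (H-h)|A|^2$ contains $|A|^2$, and any Bernstein/Shi-type computation on $|\nabla H|^2$ (or on $|\nabla H|^2\,\eta(H)$) produces the term $\nabla\bigl((H-h)|A|^2\bigr)$, hence $(H-h)\nabla(|A|^2)$, which is not controlled by a pointwise bound on $|A|$ — it requires control of $|\nabla A|$, which is precisely what you are trying to produce. Viewed as a linear parabolic equation for $H$ with merely bounded coefficient $|A|^2$, the best interior regularity one can extract is H\"older continuity of $H$ (Krylov--Safonov), not a gradient bound; Schauder-type $C^1$ estimates would need H\"older regularity of $|A|^2$, again something not yet in hand. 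So the buffer region $\widetilde S$ and the cutoff $\eta(H)$ cure the non-cylindrical-domain issue but not the missing derivative control.

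The paper avoids this by going through $|\nabla A|$ rather than $|\nabla H|$: from the lower height bound on $\breve\Omega''$ (Remark~\ref{Rem_Blowup}) it invokes (\cite{MA2}, Lemma~$2.5$) to conclude $\rho\in C^\infty(\bigR\times[0,T])$ on that region, which yields a uniform bound on $|\nabla A|$ there (even at $t=T$, since $y$ stays strictly positive); then $|\nabla H|^2\leq n|\nabla A|^2$ gives the bound you need, and the remainder of the argument is as in your Step~3. Equivalently, you could run a Shi/Bernstein estimate on $|\nabla A|^2$ (whose evolution inequality closes once $|A|$ is bounded, which you do obtain in your Step~1) rather than on $|\nabla H|^2$; but the $H$-equation alone is not enough. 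With this replacement in Step~2 the proof would be complete, and Step~1 and the enlargement to $\widetilde S$ become unnecessary.
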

\begin{proof}
\noindent
From Remark \ref{Rem_Blowup} we know that $y\mid_{\breve\Omega'} \geq y\mid_{\Gamma_{\breve\Omega'}} \geq \epsilon > 0 $. As the height is always positive in $\breve\Omega'$ we have  $\rho(x_1, t) \in C^{\infty} ( \bigR \times [0, T])$ in that region by (\cite{MA2}, Lemma $2.5$). We note that this holds in $\breve\Omega'$ even at $t=T$, as the height is strictly positive.  Thus, there exists a constant $C$ such that $|\nabla A|\mid_{\breve\Omega'} < C$ for all $t \in [0, T] $  . As $|\nabla H|^2 \leq n |\nabla A|^2$ we have bounds for $|\nabla H|$. From Lemma \ref{Lemma_GradH_on the_x1_axis} we know that there exists a constant $c'$ such that ${\big\vert} \frac{\partial H}{\partial x_1} {\big\vert} \leq c'$ in $\breve\Omega'$, for $t \in [0, T]$. Therefore
$$ {\bigg\vert} \int_{\alpha(t)}^{\beta(t)} \frac{\partial H}{\partial x_1} dx_1  {\bigg\vert} \leq  \int_{\alpha(t)}^{\beta(t)} {\bigg\vert} \frac{\partial H}{\partial x_1} {\bigg\vert}  dx_1\leq    c'\int_{\alpha(t)}^{\beta(t)}  dx_1  \, ,$$
$$ \vert H (\beta(t) , t) - H (\alpha(t), t) \vert \leq c' (\beta(t) - \alpha(t)) \, , $$
$$\frac{\left(C_2 - C_1\right)}{c'} \leq {\big\vert} x_1(l_1(t), t) - x_1(l_2(t), t) {\big\vert} \, ,  \hspace{3mm} \text{for all } \hspace{3mm} t \in [0, T]. $$
\end{proof}

For the next Lemma we recall that 
$$ \hat{\Omega}_t = \left\{\mathbf{x}(l,t) : H(l,t) > \frac{c_2}{2} \right\} \, , \hspace{3mm} \text{and}  \hspace{3mm} \hat{\Omega} = \bigcup_{t < T} \hat{\Omega}_t \, .$$
The following Lemma is important, as the singularity can only develop in $\hat{\Omega}$, a region on the surface that does not 'collapse' to a point.
\begin{lemma}\label{Lemma_width_of_hat_Omega_dashed} There exists a constant $c>0\, ,$ such that the one-dimensional Hausdorff measure of the projection of $\hat{\Omega}$ onto the $x_1$ axis satisfies $\mathscr{H}^1(I(\hat{\Omega})) > c$ for all $t \leq T$.
\end{lemma}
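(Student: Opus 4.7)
The plan is to sandwich the integral $\int_{\hat{\Omega}'_t} H\,dg_t$ between a uniform positive constant (from the average mean curvature bound, Proposition \ref{Maria's Prop 1.4}) and a quantity linear in $\mathscr{H}^1(I(\hat{\Omega}'_t))$ (from the curvature pinching $k/p\le c_1$, Proposition \ref{Prop_KonP}). Comparing the two will force the projection to have uniformly positive length.

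First, for the lower bound on $\int_{\hat{\Omega}'_t} H\,dg_t$: Proposition \ref{Maria's Prop 1.4} gives $h(t)\ge c_2$, so $\int_{M_t} H\,dg_t = h(t)|M_t| \ge c_2|M_t|$. Splitting $M_t = \hat{\Omega}'_t \cup \breve{\Omega}'_t$ and using $H\le c_2/2$ on $\breve{\Omega}'_t$,
\[\int_{\hat{\Omega}'_t} H\,dg_t \ge c_2|M_t| - (c_2/2)|\breve{\Omega}'_t| \ge (c_2/2)|M_t|.\]
A uniform lower bound on $|M_t|$ comes from volume preservation together with $\rho\le R$: since $\int_a^b \rho^n\,dx_1 = nV/\omega_{n-1}$ is fixed and $\rho^n\le R\,\rho^{n-1}$, we have $\int \rho^{n-1}\,dx_1 \ge nV/(R\omega_{n-1})$, so $|M_t| = \omega_{n-1}\int \rho^{n-1}\sqrt{1+\rho'^2}\,dx_1 \ge nV/R =: \Sigma$. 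Hence $\int_{\hat{\Omega}'_t} H\,dg_t \ge (c_2/2)\Sigma =: \Lambda > 0$.

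For the matching upper bound I will use Proposition \ref{Prop_KonP}: $k \le c_1 p$, so $H = k + (n-1)p \le (c_1+n-1)p = (c_1+n-1)/(\rho v)$ pointwise, i.e.\ $H\rho v \le c_1 + n-1$. Before the first singular time the profile remains a graph $\rho(x_1)$ over $[a,b]$, so using $dg_t = \omega_{n-1}\rho^{n-1}v\,dx_1$ and $\rho\le R$,
\[\int_{\hat{\Omega}'_t} H\,dg_t = \omega_{n-1}\int_{I(\hat{\Omega}'_t)} (H\rho v)\,\rho^{n-2}\,dx_1 \le (c_1+n-1)\,\omega_{n-1}\,R^{n-2}\,\mathscr{H}^1(I(\hat{\Omega}'_t)).\]
Combining the two estimates gives $\mathscr{H}^1(I(\hat{\Omega}'_t)) \ge \Lambda/[(c_1+n-1)\omega_{n-1}R^{n-2}] =: c > 0$ for all $t<T$, and the bound for $t=T$ will follow by continuity.

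The step I expect to be trickiest is the passage from the surface integral to the $x_1$-integral over $I(\hat{\Omega}'_t)$, which relies on the axially symmetric profile $\rho(x_1)$ being single-valued. For $t<T$ this is guaranteed by the smoothness of the flow (compare Lemma \ref{Lemma_evolutionEqs}(ix)), but at $t=T$ the profile can degenerate at a pinch-off; the cleanest remedy is to prove the bound for $t<T$ first and extend to $t=T$ by a short limiting argument, since both the pointwise estimate $H\rho v\le c_1+n-1$ and the volume constraint survive taking limits.
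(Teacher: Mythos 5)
Your proof is correct for $t<T$ and takes a genuinely different route from the paper. The paper's argument is pointwise: it invokes Lemma \ref{Grad_H_Consequences}, which in turn rests on the gradient estimate $|\partial H/\partial x_1|\le c'$ in $\breve\Omega''$ (itself a consequence of the lower height bound there, Remark \ref{Rem_Blowup}, and the higher-order smoothness estimates of \cite{MA2}), to conclude that the level sets $\{H=6c_2/10\}$ and $\{H=9c_2/10\}$—both contained in $\hat\Omega'$—stay a definite $x_1$-distance apart. Your argument is integral-geometric: you sandwich $\int_{\hat\Omega'_t}H\,dg_t$ between a uniform positive constant (from $h\ge c_2$, the volume constraint and $\rho\le R$, which together give a lower area bound $|M_t|\ge nV/R$) and a multiple of the projected length (from the exact relation $H\rho v = H/p = k/p + n-1 \le c_1+n-1$ via Proposition \ref{Prop_KonP} and $p\rho v=1$). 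This entirely bypasses Lemma \ref{Grad_H_Consequences}, Lemma \ref{Lemma_GradH_on the_x1_axis}, and the $|\nabla A|$ bound behind them, at the modest cost of requiring the explicit graph-coordinate expression $dg_t=\omega_{n-1}\rho^{n-1}v\,dx_1$. The one place the paper's route is cleaner is at $t=T$: the height bound in $\breve\Omega''$ persists to $T$, so Lemma \ref{Grad_H_Consequences} is stated directly on $[0,T]$, whereas your computation uses the graphical parametrization and therefore smoothness up to the boundary of the time interval. Your own caveat is accurate; the patch is standard (the lower bound you obtain for $t<T$ is uniform, and the sets $I(\hat\Omega'_t)$ vary continuously in $t$ on the part of $M_t$ that stays a fixed distance from the axis, which by Lemma \ref{Lemma_yLowerBound} includes a neighborhood of $\partial\hat\Omega'_t$), but should be spelled out rather than left to ``a short limiting argument.''
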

\begin{proof}
To invoke Lemma \ref{Grad_H_Consequences}, we choose  $\delta = \frac{c_2}{20}$, \, $C_1 = \frac{6c_2}{10}$ and $C_2 =\frac{9c_2}{10}$, in accordance with definition \ref{def_6.7}.  Hence $\breve\Omega'_t = \left\{\mathbf{x}(l,t) : H(l,t) \leq \frac{19}{20}c_2 \right \}$. From Lemma \ref{Grad_H_Consequences} we have $|x_1(l_1(t) , t) -   x_1(l_2(t) , t) | \geq c > 0\, , $ for all $t \leq T$. By the definition of $\hat{\Omega}_t$  and choosing $l_i(t),\, i=1,2,$ such that $H(l_i(t), t) = C_i$ we know that $\mathbf{x}(l_i(t), t) \in \hat{\Omega}_t \, , i = 1,2$. As $\mathscr{H}^1(I(\hat{\Omega})) \geq  |x_1(l_1(t) , t) -   x_1(l_2(t) , t) | $ for any $t$, we have the desired result. 
\end{proof}

%-----------------------------------------------------------------------------
% BEGIN COMMENT
\begin{comment}
\begin{lemma}\label{Lemma_H_On_P_Bdd_when_Height_Non_Zero} In any compact set $S \subset \hat{\Omega}$ where $y\geq \epsilon > 0 $ there exists a constant $c(\epsilon)$ such that $\frac{H}{p} \geq c(\epsilon) > 0 $. 
\end{lemma}
\begin{proof}
By definition we know that $H > \frac{c_2}{2}$ in $\hat{\Omega}$ for all $t < T$. In any compact set $S$ where $y\geq \epsilon > 0 \, ,$  by Proposition \ref{Prop_No_Singularities_Away_From_The_Axis} we know that $|A|^2$ is bounded and hence we get an upper bound for $p$. Therefore we have a lower bound for $\frac{H}{p}$ in $S$.
\end{proof}

\noindent
Therefore in $\hat{\Omega}\, ,$ $\frac{H}{p}$ can only tend to zero when $y$ tends zero. We will investigate this in the next section.
\end{comment}
% END COMMENT
%-----------------------------------------------------------------------------

\section{The Singularity}\label{Section_Singularity}
\noindent
We break up the investigation of the singularity into two cases, depending on the value of $|A|^2/H^2$. From now on all our calculations are done in $\bigR^3$ for two dimensional surfaces as we will use results from \cite{GH2} and the explicit parametrisation of a $2$-dimensional catenoid.

\subsection{The region $S$}
\noindent
Let $S_t \subset \hat{\Omega}_t$  and $S = \bigcup_{t<T} S_t$. For this region $S$ we assume that there exist constants $c_{12} \, , c_{13} > 0 $ such that 
$$ \frac{|A|^2}{H^2} \bigg\vert_S \leq c_{12} \,  \hspace{4mm} \text{and} \hspace{4mm} y\vert_{\Gamma_S} \geq c_{13} \,  .
$$
\noindent
The following Lemma, where we prove a gradient bound in regions of bounded $ \frac{|A|^2}{H^2}$, corresponds to Lemma $5.2$ in \cite{GH2}.
\begin{lemma}\label{Lemma_QonPBdd}
Under the above assumptions, there exists a constant $c_{15}$ such that $ \frac{|q|}{p} \leq c_{15} \, $  in $S_t \,$ for all $t <T \, .$   
\end{lemma}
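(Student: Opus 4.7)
The plan is to apply the parabolic maximum principle of Proposition \ref{Prop_Max_Principles} to an appropriate function of $q/p$ on the space-time region $S$, in the spirit of Huisken's proof of Lemma~5.2 in \cite{GH2}. A natural candidate is $f = q^2/p^2$, since the identity $p^2+q^2 = y^{-2}$ from \eqref{notEq:1.2} gives $f = v^2-1$, so a uniform upper bound on $f$ is equivalent to the desired conclusion $|q|/p \leq c_{15}$.

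First I would bound $f$ on the parabolic boundary $\Gamma_S$: the hypothesis $y\vert_{\Gamma_S} \geq c_{13}$ combined with Lemma \ref{Lemma_UV} ($vy \leq c_4$) gives $v \leq c_4/c_{13}$ on $\Gamma_S$, so $f\vert_{\Gamma_S} \leq (c_4/c_{13})^2 - 1$. Next, using Lemma \ref{Lemma_evolutionEqs}(iii)--(iv) together with the gradient identities \eqref{evEq:1.4}--\eqref{evEq:1.5}, I would derive the evolution equation for $f$. Repeated use of $p^2+q^2 = y^{-2}$ causes several apparently distinct gradient contributions to cancel, leaving an equation of the schematic form
\[
\left(\frac{d}{dt} - \Delta\right) f \;-\; \langle a,\nabla f \rangle \;=\; 2v^2\bigl[\,q^2 - k^2(1+3f)\,\bigr],
\]
with $a$ a smooth tangential vector field, which is precisely the structure required by Proposition \ref{Prop_Max_Principles}. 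Finally, I would use the pinching hypothesis $|A|^2 \leq c_{12}H^2$ together with $H > c_2/2$ in $\hat\Omega'$: dividing $k^2+p^2 \leq c_{12}(k+p)^2$ by $p^2$ yields a quadratic inequality in $k/p$ that confines $|k|/p$ to a bounded interval $|k|/p \leq K(c_{12})$ throughout $S$. At an interior space-time maximum of $f$ the derivative criteria force the right-hand side to be non-positive, which then yields $f \leq K^2/(1-3K^2)$.

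The main obstacle is that the last step above requires $K^2 < 1/3$, which is not automatic from $|A|^2/H^2 \leq c_{12}$ for arbitrary $c_{12}$. In the regime $K^2 \geq 1/3$ the reaction term in $f$ alone fails to carry a favourable sign, and one must replace $f$ by a weighted quantity such as $f\cdot\varphi(H)$ or $f\cdot\varphi\bigl((py)^2\bigr)$, exploiting the uniform lower bound $p \geq 1/c_4$ (an immediate consequence of Lemma \ref{Lemma_UV}) to produce a compensating negative term in the evolution of the weighted quantity — structurally analogous to the weighted $|A|^2$-estimate carried out in Proposition \ref{Prop_AsquaredBddAwayFromAxis}. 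With a suitable $\varphi$, Proposition \ref{Prop_Max_Principles} yields $\sup_S(f\varphi) \leq \max\bigl\{\sup_{\Gamma_S}(f\varphi), C\bigr\}$ for a controlled constant $C$, and the uniform positivity of $\varphi$ in $S$ then converts this into the required bound on $f$, and hence on $|q|/p$.
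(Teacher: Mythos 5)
Your proposal takes a genuinely different route from the paper — working with $f = q^2/p^2 = v^2-1$ rather than the paper's quantity $q/H$ — and your computation of the reaction term is in fact correct: starting from Corollary~\ref{Cor_Ev_Eq_For_Grad_V} one finds $\left(\frac{d}{dt}-\Delta\right)(v^2-1) = 2v^2\bigl[q^2 - k^2(1+3f)\bigr]$, with no first-order term. The boundary estimate via $vy\le c_4$ and $y\ge c_{13}$ is also fine. However, there is a genuine gap, and you have correctly located it but not resolved it.

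Write $\kappa = k^2/p^2$ and use $q^2 = p^2 f$, $v^2 = f+1$, so the reaction is $2(f+1)\bigl[(p^2-3k^2)f - k^2\bigr]$. This is negative for large $f$ if and only if $\kappa > 1/3$ at the point in question. The pinching hypothesis $|A|^2 \le c_{12}H^2$ gives only an \emph{upper} bound on $\kappa$; a \emph{lower} bound $\kappa > 1/3$ would follow only if $c_{12}<1$, whereas the lemma is applied in the paper with $c_{12}=\tfrac{17}{8}$ (Case I) and with $c_{12}$ depending on $c_{00}$ (Case II) — in both cases $k$ and hence $\kappa$ can vanish inside $S$ (indeed for $c_{12}\ge 1$ the pinching is compatible with $k=0$). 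When $\kappa<1/3$ the coefficient of $f^2$ in the reaction is positive, so there is no sign to exploit and the unweighted maximum principle gives nothing; in fact the sign you wrote is reversed — at an interior maximum the derivative criteria force $\left(\frac{d}{dt}-\Delta-\langle a,\nabla\cdot\rangle\right)f\ge 0$, i.e.\ the reaction $\ge 0$, which for $\kappa<1/3$ is consistent with $f$ arbitrarily large, not an upper bound.

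The proposed repair by a weight $f\varphi$ is not persuasive as stated. A weight of the form $\varphi\bigl((py)^2\bigr)=\varphi(1/v^2)$ is a function of $v$ alone, hence a function of $f$ itself, and so amounts to a monotone change of variables that cannot change whether the maximum principle closes. A weight of the form $\varphi(v^2)$ in the spirit of Proposition~\ref{Prop_AsquaredBddAwayFromAxis} requires an a priori bound on $v^2$ to keep $\lambda-\mu v^2>0$, which is circular here since $f=v^2-1$ is precisely what you are trying to bound. A weight $\varphi(H)$ does not produce a negative $f^2$ term from its own evolution $\left(\frac{d}{dt}-\Delta\right)H=(H-h)|A|^2$. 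If one instead multiplies $f$ by the natural ratio $p^2/H^2$, one gets exactly $(q/H)^2$, which is (the square of) the quantity the paper actually uses.

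The paper's choice of $q/H$ is what makes the pinching hypothesis bite: in $\hat\Omega'$ one has $H > c_2/2$, the pinching $|A|^2\le c_{12}H^2$ gives $p^2, |kp|, k^2 \lesssim c_{12}H^2$, and the bound $h\le c_3$ gives $h/H\lesssim 1$, so in the reaction term for $q/H$,
\[
\frac{q}{H}\Bigl(p^2 - q^2 - 2kp + \tfrac{h}{H}(k^2-kp)\Bigr),
\]
the $-q^2$ term dominates once $q/H$ exceeds an explicit multiple of $\sqrt{c_{12}}$, and the maximum principle applies with no sign restriction on $k$. The paper then converts $|q|/H\le c_{14}$ to $|q|/p \le c_{15}$ using $H=k+p\le(1+c_1)p$. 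This is a cleaner alignment between the quantity being estimated and the hypothesis $|A|^2\le c_{12}H^2$; your approach aligns the quantity with $p$, for which the hypothesis gives no useful lower bound on $\kappa$.
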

\begin{proof}
From Lemma \ref{Lemma_evolutionEqs} we compute the evolution equation for $\frac{q}{H}$
\begin{equation*}
\frac{d}{dt} \left( \frac{q}{H} \right ) = 
 	\Delta\left( \frac{q}{H} \right) + \frac{2}{H} \left\langle \nabla H \hspace{1mm} , \nabla \left( \frac{q}{H} \right) \right\rangle + \frac{q}{H} \left( ( p^2 - q^2 - 2kp ) + \frac{h}{H}(k^2-kp) \right) \, . 	
\end{equation*}
We know in $S_t$
$$ |A|^2 \leq c_{12} H^2 \, ,\hspace{6mm} -2kp \leq k^2 + p ^2 = |A|^2 \leq c_{12} H^2 \, ,  $$
$$ 0 < c_2 \leq h \leq c_3 \, , \hspace{6mm} H > \frac{c_2}{2}\, , \hspace{2mm} \text{such that } \hspace{2mm}   \frac{h}{H} \leq \frac{2c_3}{c_2} \, . $$ 
When $\frac{q}{H} > 2 \sqrt{\frac{2c_3c_{12}}{c_2}} $ we obtain
\begin{align*}
 p^2 - q^2 - 2kp  + \frac{h}{H}(k^2-kp) &\leq c_{12} H^2 - 8\frac{c_3}{c_2}c_{12}H^2 + c_{12}H^2 + \frac{2c_3}{c_2} \left( c_{12} H^2 + \frac{1}{2}c_{12}H^2 \right)  \\
 					&=c_{12}H^2 \left( 2 - \frac{5c_3}{c_2} \right) \leq 0 \, ,
\end{align*}
as $\frac{c_3}{c_2} \geq 1 \, .$
Therefore, when $\frac{q}{H} > 2 \sqrt{\frac{2c_3c_{12}}{c_2}}\, , $ from the non-cylindrical maximum principle (Proposition \ref{Prop_Max_Principles} )
$$ \frac{q}{H} \leq \max \left(  \max_{S_0} \frac{q}{H} , \, \hspace{2mm} \max_{\begin{subarray}{l} {\partial S_t}\\ t<T \end{subarray}}  \frac{q}{H}, \, \hspace{2mm} 2 \sqrt{\frac{2c_3c_{12}}{c_2}} \right) \, .$$

We recall that $q=\langle \nu, \mathbf{i}_1 \rangle y^{-1}$. As $\max_{\begin{subarray}{l} {\partial S_t}\\ t<T \end{subarray}} \frac{q}{H} \leq \max_{\begin{subarray}{l} {\partial S_t}\\ t<T \end{subarray}}\frac{2y^{-1}}{c_2}$ and as $y^{-1}\vert_{\begin{subarray}{l} {\partial S_t}\\ t<T \end{subarray}}  \leq \frac{1}{c_{13}}\, ,  $  the right hand side in the above estimate is bounded. \\
\noindent
Similarly when $\frac{q}{H} < -2 \sqrt{\frac{2c_3c_{12}}{c_2}} $ we have
$$ \frac{d}{dt} \left(\frac{q}{H} \right) \geq \Delta \left(\frac{q}{H} \right) + \frac{2}{H} \left\langle \nabla H \hspace{1mm} , \nabla \left( \frac{q}{H} \right) \right\rangle \, . $$

Therefore we have $\frac{|q|}{H} \leq c_{14} \, . $ As $\frac{k}{p} \leq c_1$ we obtain
$$ |q| \leq c_{14} H = c_{14} (p+ k) \leq c_{15} p \, ,$$
as desired.
\end{proof}

\noindent 
\begin{remark}\label{Sing_Domain_Does_Not_Collapse}
%\begin{itemize}
%\item[
(i) Note that $\frac{|q|}{p}$ is a geometric quantity that corresponds to the slope $\vert \rho' \vert$ of the generating curve $\rho$. Therefore, Lemma \ref{Lemma_QonPBdd} gives us a gradient bound in the region $S$. \\ \\
%\item[
\noindent
(ii) Assuming that the singularity develops in $S$, there is a point on the generating curve that approaches the axis of rotation as $t \to T$. By definition, as $y\vert_{\Gamma_S} \geq c_{13}$, we have $y \vert_{\partial S_t} \geq c_{13}$ for all $t<T$. As the boundary of the domain has a lower height bound, and as the above gradient bound holds in that domain, $S_t$ cannot collapse to a point as $t$ goes to $T$.
%\end{itemize}
\end{remark}

%We note that $S_t$ does not collapse as $t$ goes to $T$, due to the height bound on the boundary of $S_t$ and the gradient bound we obtained above. Let us suppose the singularity develops in $S$.
\noindent
The following Proposition corresponds to (\cite{GH2}, Proposition $5.3$) adjusted to the volume constrained case.

\begin{proposition}\label{Prop_TypeI}
If the singularity develops in the region $S$, then there exists a constant $C>0$ such that the second fundamental form satisfies
$$ \max_{S_t} |A|^2 \leq C \frac{1}{T-t} \, , $$
for all $t<T$.
\end{proposition}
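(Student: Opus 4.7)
The plan is to combine the pinching $|A|^2 \leq c_{12} H^2$ with the positivity $H > \frac{c_2}{2}$ in $\hat\Omega'$ to reduce the problem to a scalar reaction--diffusion inequality for $f = H^2$, and then extract the type I decay by comparing the spatial maximum of $f$ with the ODE blow-up profile $1/(T-t)$.

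First I would rewrite Lemma \ref{Lemma_evolutionEqs}(vi) as an evolution equation for $H^2$:
\begin{equation*}
\left(\frac{d}{dt} - \Delta\right) H^2 \;=\; -2|\nabla H|^2 + 2 H^2 |A|^2 - 2 h H |A|^2.
\end{equation*}
Since $S \subset \hat\Omega'_t$ forces $H > \tfrac{c_2}{2} > 0$ and Proposition \ref{Maria's Prop 1.4} gives $h \geq c_2 > 0$, both $-2|\nabla H|^2$ and $-2hH|A|^2$ are nonpositive. Invoking the pinching hypothesis $|A|^2 \leq c_{12} H^2$ then yields the key differential inequality
\begin{equation*}
\left(\frac{d}{dt} - \Delta\right) H^2 \;\leq\; 2 c_{12}\,(H^2)^2 \quad\text{in } S.
\end{equation*}
In parallel, the structural hypothesis $y|_{\Gamma_S} \geq c_{13}$ places the parabolic boundary at uniform positive distance from the axis of rotation, so Proposition \ref{Prop_No_Singularities_Away_From_The_Axis} produces a uniform bound $|A|^2 \leq K$ on $\Gamma_S$; combined with $H^2 \leq n|A|^2$ this gives $H^2 \leq nK =: M$ on $\Gamma_S$.

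Finally I would invoke Hamilton's trick for the evolving supremum $u(t) := \sup_{\overline{S}_t} H^2(\cdot,t)$. On the (open) set of times where $u(t) > M$ the supremum is attained at an interior point $(l_t, t)$, at which $\nabla H^2 = 0$ and $\Delta H^2 \leq 0$; the differential inequality therefore collapses to the scalar ODE bound $u'(t) \leq 2 c_{12}\, u(t)^2$. Let $w(t) = 1/(2c_{12}(T-t))$ be the solution of $w' = 2 c_{12} w^2$ that blows up exactly at $T$. Any value $u(t_0) > w(t_0)$ would, by the scalar comparison for an increasing nonlinearity, force $u$ to blow up strictly before $T$, contradicting the assumption that $T$ is the first singular time. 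Hence $u(t) \leq \max\bigl(M, w(t)\bigr)$, and absorbing constants via $T-t \leq T$ gives $H^2 \leq C_0 / (T-t)$. The pinching condition then yields
\begin{equation*}
|A|^2 \;\leq\; c_{12}\, H^2 \;\leq\; \frac{c_{12} C_0}{T-t} \;=: \frac{C}{T-t}.
\end{equation*}

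The main technical delicacy is the ODE comparison at the moving spatial maximum inside the non-cylindrical set $S$: a naive application of Proposition \ref{Prop_Max_Principles} is awkward because the driving term $2c_{12}(H^2)^2$ is superlinear rather than of the advective form $\langle a, \nabla f\rangle$ covered there. This is why the boundary bound supplied by Proposition \ref{Prop_No_Singularities_Away_From_The_Axis} is indispensable: it rules out boundary or initial blow-up, confines any unbounded behaviour of $u$ to interior points where Hamilton's trick applies, and thereby closes the comparison with $w(t)$.
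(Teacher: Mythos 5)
Your differential inequality for $H^2$ is correct, but the ODE comparison at the end runs in the wrong direction, and this is fatal. From Hamilton's trick you obtain $u'(t) \leq 2c_{12}\,u(t)^2$, which is an \emph{upper} bound on the growth of $u(t) = \sup_{\overline{S}_t} H^2$. Such an upper bound cannot produce an upper bound on $u$ near the singular time; in fact it gives the opposite. Integrating $(1/u)' = -u'/u^2 \geq -2c_{12}$ from $t$ to $T$ and using $u(T^-) = \infty$ yields $1/u(t) \leq 2c_{12}(T-t)$, i.e.\ the \emph{lower} bound $u(t) \geq \frac{1}{2c_{12}(T-t)}$. The claim that $u(t_0) > w(t_0)$ would force early blow-up is only valid if $u' \geq 2c_{12}u^2$; with $u' \leq 2c_{12}u^2$ the quantity $u$ may simply stop growing after $t_0$ (the constant function trivially satisfies $u' \leq 2c_{12}u^2$) and no contradiction arises. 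So the proposal establishes a type I \emph{lower} bound on $\max H^2$, not the needed upper bound.

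The structural reason is that a type I upper bound requires a differential inequality in the opposite direction on a quantity that must blow up at $T$: a \emph{lower} bound on its derivative. The paper achieves exactly this by working with $y^{-1}$ rather than $H^2$. Because $y(l,t)$ satisfies the pointwise ODE $\tfrac{d}{dt}y = -(H-h)py$ (equation \eqref{evEq:1.7}), one has $\tfrac{d}{dt}y^{-1} = (H-h)p\,y^{-1}$. The pinching hypothesis $|A|^2 \leq c_{12}H^2$ gives $H \geq p/\sqrt{c_{12}}$, the gradient estimate of Lemma \ref{Lemma_QonPBdd} gives $y^{-2} = p^2 + q^2 \leq (1+c_{15}^2)p^2$, and together (for $p$ large) these produce $\tfrac{d}{dt}y^{-1} \geq \epsilon\, y^{-3}$. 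Setting $U(t) = \max_{S_t} y^{-1}$ and using $U^{-2}(T^-) = 0$, one integrates $\tfrac{d}{dt}U^{-2} \leq -2\epsilon$ to obtain $U(t) \leq (2\epsilon(T-t))^{-1/2}$, and then $|A|^2 \leq c_{12}H^2 \lesssim y^{-2} \leq C/(T-t)$. That argument is correct precisely because the inequality constrains $U$ from below, so that $U$ blowing up exactly at $T$ forbids it from being too large earlier. Your approach would need an analogous lower bound on $\tfrac{d}{dt}(\sup H^2)$, which the maximum principle does not supply (at an interior maximum $\Delta H^2 \leq 0$, giving an upper bound only).
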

\begin{proof}

Using Lemma \ref{Lemma_evolutionEqs} (ii) we have
$$ \frac{d}{dt} y^{-1} = (H-h)py^{-1}  \geq (H-c_3) py^{-1} \, . $$
As $p^2 \leq |A|^2 \leq c_{12} H^2$, and using Lemma \ref{Lemma_QonPBdd}
$$  y^{-2} = p^2 + q^2 \leq (1 + c_{15}^2) p^2 \, ,$$ 
$$ \frac{d}{dt} y^{-1} \geq \left( \frac{1}{\sqrt{c_{12}}}p-c_3 \right)\frac{1}{\sqrt{(1 + c_{15}^2)}}y^{-2}  \, . $$
As $S$ contains the singularity, for $t$ near $T$, we have $p \to \infty$. For  $p \geq 2\sqrt{c_{12}}c_3$ 
\begin{equation*}
\frac{1}{\sqrt{c_{12}}}p - c_3 \geq \frac{1}{2\sqrt{c_{12}}}p \, . 
\end{equation*}
\noindent
Therefore
$$ \frac{d}{dt} y^{-1} \geq \frac{1}{2\sqrt{c_{12}}}p \frac{1}{\sqrt{(1 + c_{15}^2)}}y^{-2} \geq \frac{1}{2\sqrt{c_{12}}(1 + c_{15}^2)} y^{-3} \, .$$
 Let $U(t) = \max_{S_t} y^{-1} \, . $  By renaming the constant $\frac{1}{2\sqrt{c_{12}}(1 + c_{15}^2)} = \epsilon$ we obtain
 $$ \frac{d}{dt} U(t) \geq \epsilon U^3 (t) \Leftrightarrow \frac{d}{dt} U^{-2}(t) \leq -2\epsilon \, . $$
 Since $U^{-2}(t)$ tends to zero as $t \rightarrow T$, we integrate from $t$ to $T$ and obtain
 \begin{equation*}
 U(t) = \max_{S_t} y^{-1} \leq \frac{1}{\sqrt{2\epsilon(T-t)}} \, .
 \end{equation*}
 As $|A|^2 \leq c_{12} H^2$ and $H = k + p \leq c_1p + p \leq (c_1+1) y^{-1} $ we get the result.
 \end{proof}

 These results will be useful when we consider the different cases outlined below. 

\subsection{Different cases}
\noindent
Due to the lower height bound in $\breve{\Omega}$ ( Lemma \ref{Lemma_yLowerBound}), and Proposition \ref{Prop_AsquaredBddAwayFromAxis}, we know that $|A|^2$  is bounded in $\breve{\Omega} $. Therefore the singularity can only develop in $\hat{\Omega}$.  Also, from Lemma \ref{Lemma_yLowerBound} we know that $ \inf_{ \Gamma_{ \hat{\Omega}}} y \geq c' > 0 $. Furthermore, from Lemma  \ref{Lemma_width_of_hat_Omega_dashed} the projection of $\hat{\Omega}$ onto the $x_1$ axis   $\mathscr{H}^1(I(\hat{\Omega})) > c > 0$.  Keeping these results in mind we  consider two scenarios, depending on the value of $|A|^2/H^2$ in $\hat{\Omega}$: {\bf Case I} $\frac{|A|^2}{H^2} \leq c $ for $t < T$ for some $c$; {\bf Case II} $\frac{|A|^2}{H^2}$ is unbounded in $\hat{\Omega}$ .   \\

\begin{figure}[h!]
   \centering
   \scalebox{0.75}{\pagestyle{empty}
\psset{xunit=1.0cm,yunit=1.0cm,dotstyle=o,dotsize=3pt 0,linewidth=0.8pt,arrowsize=3pt 2,arrowinset=0.25}
\begin{pspicture*}(-4.3,1)(9.78,6.3)
\psline(1,5)(1,4)
\psline(-3,4)(5,4)
\psline{->}(-3,4)(-3,3)
\psline{->}(5,4)(5,3)
\psdots[dotstyle=*,linecolor=black](1,5)
\uput[90](1, 5){$\hat{\Omega}'$}
\uput[0](-3.94, 2.62){$\frac{|A|^2}{H^2} \leq c $  }
\uput[0](-4.24, 2){for $t < T$}
\uput[0](3.64, 2.62){$\frac{|A|^2}{H^2}$ is unbounded}
\end{pspicture*}}\label{fig:differentCases}
		%\caption{``"'}
\end{figure}

\subsection{Case I: $\frac{|A|^2}{H^2} \leq c $ for $t < T$ in $\hat{\Omega}$}\label{Section_Chap_7_Case_1}
\noindent

From Lemma \ref{Lemma_yLowerBound} we know that $ \inf_{ \Gamma_{ \hat\Omega}}  y  \geq c' > 0\, .$ By letting $S_t := \hat{\Omega}_t\, ,$ from Lemma \ref{Lemma_QonPBdd} and Proposition \ref{Prop_TypeI} we conclude that the singularity is of type I.

\subsection{Case II: $\frac{|A|^2}{H^2}$ is unbounded in $\hat{\Omega}$}\label{Section_Chap_7_Case_2}
\noindent

For this case we prove that a singularity that develops in $\hat{\Omega}$  is of type I by way of contradiction by  using a rescaling procedure similar to that used in \cite{HS2}. A similar rescaling argument is used in \cite{JHSK1} to prove that no singularities can develop if the mean curvature of the surface is bounded in $[0,T)$. \\

\begin{proposition}\label{PropStatementFalse} If $\frac{|A|^2}{H^2}$ is unbounded in $\hat{\Omega}$, then a singularity that develops in $\hat{\Omega}$ is of type I.
\end{proposition}
\begin{proof}
Let us assume a singularity that develops in  $\hat{\Omega}$ is of type II. In order to understand the singularity better, we use a rescaling procedure similar to that in \cite{HS2}. We choose a sequence $\left(l_i, t_i \right)$ as follows. For any integer $i \geq 1$, \, let $t_i \in \left[0, T - 1/i \right]$, \, $l_i \in M^2$, such that
$$ |A|^2(l_i, t_i)\left(T - \frac{1}{i} - t_i \right) = \max_{\substack{ l \in M^{2} \\ t \leq T-\frac{1}{i}} } |A|^2(l, t)\left(T - \frac{1}{i} - t \right) \,. $$

\noindent
Let
\begin{equation} \label{Eq_New_Rescaling_1}
 \alpha_i = |A|(l_i, t_i) \,,  \hspace{5mm} c_i = \alpha_i^2 \left(T - \frac{1}{i} - t_i \right)\, , \hspace{3mm} \text{and} \hspace{3mm} \chi_i = -\alpha_i^2 t_i \, . 
\end{equation}

\noindent
Similar to Lemma $4.3$ in \cite{HS2}, if the singularity is of type II, as $i \to \infty$  we have $t_i \to T$, $\alpha_i \to \infty$, $c_i \to \infty$ and $\chi_i \to -\infty$.

\noindent
From \eqref{Eq_New_Rescaling_1} we obtain 
\begin{equation}\label{eq_alpha_i_proof_contradiction}
 \alpha_i =  |A|(l_i, t_i) \geq \sqrt{\frac{c_i}{T-t_i}} \, .
\end{equation}

We consider the family of rescaled surfaces $\mathscr{M}_{i, \tau}$ defined by the following immersions:
\begin{equation} \label{TypeIHbddEq_1}
 \tilde{\mathbf{x}}_i(\cdot, \tau) = \alpha_i \left( \mathbf{x}(\cdot, \alpha_i^{-2} \tau + t_i )	- \langle \mathbf{x}(l_i, t_i) , \mathbf{i}_1 \rangle \mathbf{i}_1 \right)  \, , 
\end{equation} 
where $ \tau \in [ \chi_i, 0] \, .$	 This rescaling is different from the type II rescaling used in \cite{HS2} due to the following reasons: it is rescaled from a point on the axis of rotation; and the rescaled time interval is different, i.e. for every $i$, we rescale the original surface for $t \in [0, t_i]$.

\noindent
For this rescaling we have
$$  \tilde{H_i}( \cdot, \tau ) = \alpha_i^{-1} H(\cdot, \alpha_i^{-2} \tau + t_i ) \,  \hspace{3mm} \text{and} \hspace{3mm} |\tilde{A}_i|( \cdot, \tau ) = \alpha_i^{-1}	|A| (\cdot, \alpha_i^{-2} \tau + t_i )  \, .$$

This rescaling guarantees that $|\tilde{A}_i| \leq 1$ for $t \leq t_i$. From Proposition \ref{Prop_KonP2} we know that $\frac{|k|}{p} \leq \max(c_1, 1)$ in $\hat{\Omega}$. Therefore 
$$  |A| = \sqrt{k^2 + p^2} \leq c_5p \leq c_5y^{-1} \, , $$
where $c_5= \sqrt{1+ (\max(c_1,1))^2 }$. Hence we obtain
\begin{equation}\label{HbddEq_4}
 |\tilde{A}_i| = \alpha_i ^{-1} |A| \leq \alpha_i ^{-1} c_5 y^{-1} = c_5 (\alpha_i y)^{-1} = c_5 \tilde{y}^{-1} \, . 
\end{equation}
\noindent
Thus the rescaled surfaces do not float away to infinity. %As $|\tilde{A}_i| \leq 1$ for all $i$, by employing a diagonal sequence  \margincomment{is this a diagonal argument now?} argument, we find that the rescaled surfaces $\mathscr{M}_{i, \tau}$ converge to a smooth hypersurface, which we denote by $\mathscr{M}_0\, .$ We explain this process in the next paragraph. \\
\noindent
We note that the rescaled surfaces $M_{i, \tau}$ defined by \eqref{TypeIHbddEq_1} also evolve by volume-preserving mean curvature flow. This can be shown by observing that $d\tilde{\mu}_\tau = \alpha_i^2d{\mu}_t $ and $\tilde{h}_i(\tau) = \alpha_i^{-1}h(t)$, so that
\begin{equation}\label{eq_rescaled_vol_flow}
\frac{d}{d\tau} \tilde{x}_i = - \left(\tilde{H}_i - \tilde{h}_i\right)\nu \,.
\end{equation}

The uniform curvature bound $|\tilde{A}_i| \leq 1$ gives rise to uniform bounds on all covariant derivatives of the second fundamental form, see for example \cite{GH3}. By  a standard method, based on the Arzela-Ascoli theorem, we can therefore find a subsequence which converges uniformly in $C^{\infty}$ on compact subsets of $\mathbb{R}^3 \times \mathbb{R}$ to a non-empty smooth limit flow which exists on the interval $ \tau \in (-\infty, 0)$.  \\

In order to analyse the obtained limit flow, which we label by $\tilde{M}_{\infty, \tau}$, we will next show that the sequence $\{ \tilde{H}_i \}$ converges to zero along different paths in $\hat{\Omega} $ approaching the singularity. As $ H  > \frac{c_2}{2}$ in $\hat{\Omega}$, $\frac{|A|^2}{H^2}$ can only be unbounded near the singularity. That is, there exists a neighbourhood  $N_\epsilon$ around the singularity $\left( \mathbf{x}_*, T \right) \in \hat{\Omega}$ such that  $\frac{|A|^2}{H^2} > \frac{1}{\epsilon}$ for any small $\epsilon >0 $. In the neighbourhood $N_\epsilon$, we have $\frac{H}{|A|}  < \epsilon $. As 
$$ \tilde{H} = \alpha_i^{-1} H \leq \frac{H}{|A|} <  \epsilon \, , $$
\noindent
we have $ \tilde{H} \to 0 $  near the singularity. Outside $N_\epsilon$, but in $\hat{\Omega}$,  $H$ is bounded making $ \tilde{H} = \alpha_i^{-1} H \to 0 $ as $\alpha_i \to \infty$ . We conclude that on all paths in $\hat{\Omega} \, , \tilde{H}_i$ converges to zero as $i$ goes to infinity. \\  

%% As $\mathscr{H}^1(I(\hat{\Omega})) > c > 0$ from  Lemma \ref{Lemma_width_of_hat_Omega_dashed} we get a. 

The limiting solution $\mathscr{M}_{\infty, \tau}$ is a catenoid, which we rename by $\hat{\mathscr{M}}$,  as it is the only axially symmetric minimal surface with zero mean curvature. \\ \\

We are now in a position to show that we have a contradiction: In order to get a better understanding of the original surface we rescale back $\mathscr{M}_{i ,\tau}$ for large $i$, and show that the estimate $vy \leq c_4$ would not hold on that (the original) surface. \\

%However for large $i$  if we rescale back $\mathscr{M}_{i ,0}$, to get an understanding of the original surface, then we see that the estimate $vy \leq c_4$ does not hold on that surface. Therefore as we will show in the next paragraph we get a contradiction.  \\ 
\noindent
We denote the quantities associated to the catenoid $\hat{\mathscr{M}}$ by a hat $\hat{\hspace{1mm} }$.  We obtain the catenoid $\hat{\mathscr{M}}$ by rotating $\hat{y} = c_5 \cosh(c_5^{-1}\hat{x}_1) \,  $ around the $x_1$ axis, where $\hat{x}_1$ is the $x_1$ coordinate of the limiting surface $\hat{\mathscr{M}}$. For any $\epsilon_1>0$ and for any $l_0 \in M^n$  we have (since $\mathscr{M}_{i ,\tau}$ converge to $\hat{\mathscr{M}}$)

$$  |\hat{v}(l_0)\hat{y}(l_0) - \tilde{v}_i (l_0, \tau)\tilde{y}_i(l_0, \tau)| \leq \epsilon_1  \hspace{5mm} \text{for large} \hspace{2mm} i \, .$$

\noindent
For the catenoid $\hat{v} = \sqrt{1 + \hat{y}'^2}=\sqrt{1 + \sinh^2(c_5^{-1}\hat{x}_1)} = \cosh(c_5^{-1}\hat{x}_1)$. As $\tilde{y}_i = \alpha_i y $ and $ \tilde{v}_i =  v $ we have

$$   c_5 \cosh^2(c_5^{-1}\hat{x}_1(l_0))  -\epsilon_1 \leq  \alpha_i v(l_0\, , \alpha_i^{-2}\tau + t_i) y (l_0\, , \alpha_i^{-2}\tau + t_i) \, , $$

\begin{equation}\label{eq_Hbdd_eq6}
 \frac{c_5}{ 2\alpha_i}\left( \cosh (2c_5^{-1}\hat{x}_1) +1  \right) - \frac{\epsilon_1}{ \alpha_i} \leq vy  \hspace{5mm} \text{for} \hspace{2mm} i >I_0 \, .
\end{equation}
%%%%%%%%%%%%%%%%%%%%%%%%%%%%%%%%%%%%%%%%%%%%%%%%%%%%%%%%%%%%%%%%%%%%%%%%%%%%%%%%%%%%%%%%%
%%%% BEGIN INSERT  on 2017 - January - 26th  

\noindent
For a given $i$, we can find values of $\cosh (2c_5^{-1}\hat{x})$ as large as we want. Therefore, for a given $i$ we can find many points $\hat{x}_1$ such that
\begin{equation}\label{neweq_Hbdd_neweq1}
 \frac{c_5}{ 2\alpha_i}\left( \cosh (2c_5^{-1}\hat{x}_1) +1  \right) - \frac{\epsilon_1}{ \alpha_i} >> c_4
\end{equation}

%%%% END INSERT  on 2017 - January - 26th  
%%%%%%%%%%%%%%%%%%%%%%%%%%%%%%%%%%%%%%%%%%%%%%%%%%%%%%%%%%%%%%%%%%%%%%%%%%%%%%%%%%%%%%%%%

\noindent  
From Lemma \ref{Lemma_UV} we know that $vy \leq c_4$. Therefore \eqref{neweq_Hbdd_neweq1} and \eqref{eq_Hbdd_eq6}
contradict Lemma \ref{Lemma_UV}: by examining the rescaled surfaces we find that the estimate $vy \leq c_4$ does not hold on the corresponding, non-rescaled, hypersurfaces near the singular time $T$. 

%that means if we examine the rescaled surfaces, we can see that the estimate $vy \leq c_4$ does not hold on the corresponding original, non-rescaled hypersurfaces, near the singular time $T$. \\
\noindent
 Therefore we have a contradiction to the original assumption that the singularity is of type II.  %and Statement \ref{Statement_Negation_Of_Type_I} is false. 
\end{proof}

\noindent
Hence there exists a constant $c >0$ such that for all  $t \in [0, T)\, ,$
$$ \max_{l \in M^2} |A|^2(l, t) \leq \frac{c}{T-t} \, .
$$
\end{proof}

% We have proved that the singularity is of type I if it develops in $\hat{\Omega} $. 
The combination of cases I and II, gives the proof of Theorem \ref{Theorem_TypeI}.

\section*{Appendix : Proof of the non-cylindrical maximum principle }\label{sec:Appendix}
\noindent
{\bf Proof of Proposition \ref{Prop_Max_Principles} } \\
%\begin{proof}{ Proposition \ref{Prop_Max_Principles} }
{\bf Part A. } We show that 
\begin{equation}\label{eq:Max_Prin4}
\sup_V f \leq  \sup_{\Gamma_V} f  \, .
\end{equation}
Let $\tilde{f} = f - \epsilon_1 t \, ,$ where $\epsilon_1 > 0\, .$ It holds $\tilde{f}(l, 0) =  f(l, 0)$ and on $\Gamma_V$, $\tilde{f}(l, t) \leq f(l, t)$. We note that 
$$ \frac{d \tilde{f}}{d t} =  \frac{d f}{d t} - \epsilon_1 \, , \hspace{3mm} 
\Delta \tilde{f} = \Delta f \, , \hspace{3 mm} \nabla \tilde{f} = \nabla f \, .
$$ 
Therefore 
$$ \left( \frac{d}{d t} - \Delta  - a\cdot \nabla \right) \tilde{f} < 0 \, .$$
\noindent
At any interior maximum of $\tilde{f}$ , the standard derivative criteria for a the local maximum say
$$ \frac{d \tilde{f}}{d t} \geq 0 \, , \hspace{3mm} \frac{\partial \tilde{f}}{\partial x_i} = 0 \, , \hspace{3mm} \frac{\partial ^2 \tilde{f}}{\partial x_i \partial x_j} \leq 0 \, .
$$
\noindent
As 
\begin{equation}\label{eq:Max_Prin1}
\Delta \tilde{f} =g^{ij}\left( \frac{\partial^2 \tilde{f}}{\partial x_i\partial x_j} - \Gamma^k_{ij}\frac{\partial \tilde{f}}{\partial x_k} \right) \,  \hspace{3mm} \text{and} \hspace{3mm} \nabla \tilde{f}  = g^{ij}\frac{\partial \tilde{f} }{\partial x_j}\frac{\partial \mathbf{x}}{\partial x_i} \, , \hspace{3mm}
\end{equation}

\noindent
by choosing normal coordinates, such that $g_{ij} = \delta_{ij}$ at the point that corresponds to the interior maximum, we have
$$ \left( \frac{d}{d t} - \Delta  - a\cdot \nabla \right) \tilde{f} \geq 0 \, . $$
\noindent
This is a contradiction. Hence $\tilde{f}(l, t)$ is bounded by the values of $\sup_{\Gamma _V} \tilde{f} $ at all times. Therefore
%$$ \sup_V \tilde{f}(l, t) \leq \sup_{\Gamma_V} f  \, , $$
$$ \sup_V \tilde{f} \leq \sup_{\Gamma _V} \tilde{f} \leq  \sup_{\Gamma_V} f  \, , $$
$$ \sup_V  f(l, t) - \epsilon_1 T \leq \sup_V \left( f(l, t) - \epsilon_1 t \right) \leq \sup_{\Gamma_V} f(l, t) \, , $$
$$  \sup_V f(l, t) \leq \sup_{\Gamma_V} f(l, t) + \epsilon_1 T \hspace{3mm} \text{for all } \hspace{3mm} \epsilon_1 > 0 \, , $$
\noindent
giving us
$$  \sup_V f(l, t) \leq \sup_{\Gamma_V} f(l, t) \, , $$
\noindent
which completes {\bf Part A} . \\
\noindent
{\bf Part B.} For this part we suppose that $f$ has a positive maximum in $\overline{V}$. We will prove by contradiction that
\begin{equation}\label{eq:Max_Prin2}
\sup_V f \leq \sup_{\delta_V} f \, .
\end{equation}
\noindent
Suppose that \eqref{eq:Max_Prin2} does not hold. As $ \sup_V f(l, t) \leq \sup_{\Gamma_V} f $, the maximum of $f$ can only be achieved at an interior point of $Z_V$ for \eqref{eq:Max_Prin2} to be contradicted.  We denote by $Z_{\max}$ the union of $Z_t$'s on which the maximum is achieved. Let  $t_*$ denote the first time that the maximum is achieved on $Z_V$.  Let %(We note that the proof does not change whether the maximum is achieved at a single point, multiple disjoint points, or on an interval in $Z_V$, or in a region with a part of $Z_{t_*}$ as the boundary.)  
$$ K = \{ (l, t) \in \overline{V} : f(l, t) = \sup_V f  \}  \, . % = \| f \|_{\infty} \}  \, .
$$

\begin{figure}[h]
   \centering
   \scalebox{0.75}{\pagestyle{empty}
\newrgbcolor{xdxdff}{0.49 0.49 1}
\psset{xunit=1.0cm,yunit=1.0cm,dotstyle=o,dotsize=3pt 0,linewidth=0.8pt,arrowsize=3pt 2,arrowinset=0.25}
\begin{pspicture*}(-4.3,-0.76)(11.16,5.64)
\newrgbcolor{xdxdff}{0.49 0.49 1}
\psline(-3,0)(-3,5)
\psline(3,0)(3,5)
\psline(-3,0)(3,0)
\psline(-3,4)(3,4)
\pscurve(-2.6,0)(-2.52,0.56)(-2.38,1)
\psline[linecolor=gray](-2.38,1)(-1.52,1)
\pscurve(-1.52,1)(-1.02, 1.6)(-1.4, 2.32)(-1,3)
\psline[linecolor=gray](-1,3)(-0.34,3)
\pscurve(-0.34,2.98)(-0.3, 3.68)(0,4)
\psline[linecolor=gray](0,4)(1,4)
\pscurve(1,4)(1.3, 3.36)(0.92, 2.56)(1.36,2)
\psline[linecolor=gray](1.36,2)(2.24,2)
\pscurve(2.24,2)(2.56, 1.46)(2.34, 0.52)(2.62,0)
\psline[linestyle=dotted](2.24,2)(3,2)
\uput[180](-3,0){$ t= 0 $}
\uput[180](-3,4){$t = T$}
\uput[0](3, 2){$t_*$}
\uput[90](0,0){ $V_0$}
\uput[-90](1.8,2){  $Z_{t_*}$}
\uput[90](0, 2){$V$}
%for the new part of the picture
\pscircle(1.8,1.78){0.48}
\psline[linecolor=gray](5.34, 3)(8, 3)
\pscustom[fillstyle=hlines,linecolor=gray,hatchcolor=gray]{
\psline(7.56, 2)(5.56, 2)
\pscurve(5.56,2)(6, 1)(6.64,0.54)(7.46, 1)(7.56,2)
}
\pscurve[linecolor=gray](5.78, 3)(5.52, 2.7)(5.56,2)
\pscurve[linecolor=gray](7.24,3)(7.56, 2.74)(7.56, 2)
\pscurve(8, 3)(9.48,1.86)(10.08,0.52)
\pnode(2.00728, 2.2134){A}
\pnode(5.34, 3.02){B}
\ncarc{->}{A}{B}

\psline[linestyle=dotted](8.3, 1)(7.46, 1)
\psline[linestyle=dotted](8.3, 2)(7.56, 2)
\psline[linestyle=dotted](8.7, 3)(8,3)
\uput[0](8.3, 2){$ t_3$}
\psline(6, 1)(7.46, 1)
\uput[0](8.3, 1){$ t_2$}
\uput[0](8.7, 3){$ t_*$}
\pnode(6.48, 2.58){C}
\pnode(6.36, 1.56){D}
\pnode(6.52, 0.72){E}
\pnode(4.96, 1.32){F}
\pnode(8.04, 0.5){H}
\pnode(7.06, 1.2){G}
\pnode(6.98, 0.84){I}
\ncarc{->}{F}{C}
\ncarc{->}{F}{D}
\ncarc{->}{F}{E}
\ncarc{->}{H}{G}
\ncarc{->}{H}{I}
\uput[0](8.04, 0.5){ $K'_\beta$}
\uput[-90](4.96, 1.32){ $K_\beta$}
\end{pspicture*}}
		\caption{If the maximum is achieved on $Z_V$}
		\label{fig_nonCylindrical2}
\end{figure}
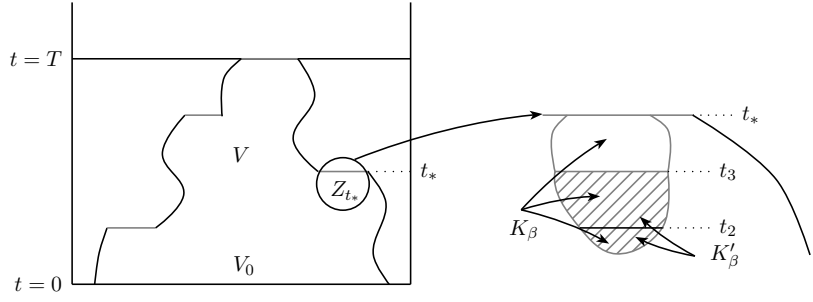

\noindent
(We note that $K \cap \partial V $ is non-empty as the maximum is achieved in $Z_{\max}$ and also that $K \cap  \partial V  \subset Z_V$.) Therefore there exists a $\beta>0$  such that
$$ \sup_V f  > \beta > \sup_{\Gamma_V \backslash Z_{\max}} f \, . $$
\noindent
As $\delta_V \subset \left(\Gamma_V \backslash Z_{\max} \right)$ we have
$$ \sup_{\Gamma_V \backslash Z_{\max}} f \geq  \sup_{\delta_V} f \, . $$
\indent Define
$$ K_\beta = \{ (l, t) \in \overline{V} : f(l,t) \geq \beta  \}  \, . $$
\noindent
We note that $K_\beta$ is not empty and it may not be a connected set. It holds that $Z_{t_*} \subset K_\beta$. We work with the connected component of $K_\beta$, which has $Z_{t_*}$ as a part of its boundary.   Let $ 0 < \epsilon_2 < \sup_{V} f - \beta$. As $V$ is open there exists $(l_2, t_2) \in V$ (depending on $\epsilon_2$) such that $f(l_2, t_2) \geq \sup_V f - \epsilon_2 $. We take $t_2$ to be the earliest time such that $f(l, t) \geq \sup_V f - \epsilon_2 $ is satisfied.  Take $t_3 \in (t_2, t_*)$ and choose a smooth function $\phi:[0, T] \rightarrow \bigR\, ,$ such that $0 \leq \phi \leq 1\, , \, \phi' < 0 $ on $(t_2, t_3)\, ,$ and $ \phi = 1$ on $[0, t_2]$, and  $\phi = 0$ on $[t_3, T].$ The set
$$ K'_\beta =  \{ (l, t) \in K_\beta : t \leq t_3 \} \neq \emptyset \, , $$
\noindent
is a compact subset in $V$. The set $K'_\beta$ is in the interior of $V$, thus $K'_\beta \cap Z_V = \emptyset\, .$  As $(\phi f )(l_2, t_2) = f(l_2, t_2)\, , $ and as the maximum of $f$ is achieved in $Z_{t_*}$, and $\phi \geq 0 \, ,$ the supremum of $\phi f $ in $K_\beta$ must be at least as big as $f(l_2, t_2)$.   Also 
$$ \max_{K_\beta} \phi f = \max_{K'_\beta} \phi f \, , $$
\noindent
as $\phi = 0$ on $[t_3, T].$ Hence
$$\max_{K'_\beta} \phi f \geq f(l_2, t_2) \geq \sup_V f - \epsilon_2 > \beta > 0 \, . $$

On the other hand, $V\backslash K'_\beta = (V\backslash K_\beta) \cup \left( K_\beta \backslash K'_\beta \right)$. On $(V \backslash K_\beta) \cap \{(l, t) \in V : f \geq 0 \}$ we have $\phi f \leq f < \beta $, and on $(V \backslash K_\beta) \cap \{(l, t) \in V : f < 0 \}$ we have $\phi f < \beta \, .$ On $K_\beta \backslash K'_\beta\, , \phi = 0 $. As a result
\begin{equation}\label{eq:Max_Princ_3}
 \sup_{V\backslash K'_{\beta}} \phi f < \beta  < \max_{K'_{\beta}} \phi f \, .
\end{equation} 
\noindent
Therefore, if we denote by $(l_\phi, t_\phi)$ the point at which $(\phi f) (l_\phi, t_\phi) = \sup_V (\phi f)\, ,$ we can see that $(l_\phi, t_\phi) \in K'_\beta \, .$  As
%Therefore there exists $ (l_\phi, t_\phi) \in K'_\beta$ such that $(\phi  f)(l_\phi, t_\phi) = \sup_{V}\phi  f\,. $  %$(l_\phi, t_\phi)$ 
$$ \frac{d }{d t}(\phi f) = \phi \frac{d  f}{d t} + \phi' f  \, , \hspace{3mm} 
\Delta \left(\phi f \right) = \phi\Delta  f \, , \hspace{3 mm} \nabla \left(\phi f\right) = \phi\nabla  f \, , $$
\noindent
we have 
$$\left( \frac{d}{d t} - \Delta  - a\cdot \nabla \right)(\phi  f) = \phi\left( \frac{d}{d t} - \Delta  - a\cdot \nabla \right)  f + \phi' f \, . $$
\noindent
As $\phi'(l_\phi, t_\phi) \leq 0 $ and $ f(l_\phi, t_\phi) > 0 $ we obtain
$$ \left( \frac{d}{d t} - \Delta  - a\cdot \nabla \right) (\phi f) \leq 0 \, .$$
\noindent
By using Part A with $\phi f$ replacing $f$ we have 
$$ \sup_V (\phi  f) \leq \sup_{\Gamma_V} (\phi  f) \, . $$
But this is a contradiction as $\sup_V (\phi  f) = (\phi  f) (l_\phi, t_\phi)$ with $(l_\phi, t_\phi) \notin \Gamma_V\, ,$ and because $\Gamma_V \subset \overline{V} \backslash K'_\beta$ and \eqref{eq:Max_Princ_3} holds. Therefore our original assumption is wrong. Hence a maximum of $ f$ does not occur in $Z_V$, that means $K \cap \partial V \not\subset Z_V\, .$ Therefore we conclude that \eqref{eq:Max_Prin2} is true. \\
%\end{proof}
\noindent
This concludes the proof of Proposition \ref{Prop_Max_Principles}.

\bibliographystyle{acm}
\bibliography{citations}
\end{document}